\DeclareFontFamily{U} {cmr}{}
\DeclareFontShape{U}{cmr}{m}{n}{
	<-6> cmr5
	<6-7> cmr6
	<7-8> cmr7
	<8-9> cmr8
	<9-10> cmr9
	<10-12> cmr10
	<12-> cmr12}{}
\DeclareSymbolFont{Xcmr} {U} {cmr}{m}{n}
\DeclareMathSymbol{\Omega}{\mathord}{Xcmr}{'012}
\setlist[itemize]{topsep=0ex,itemsep=0ex,parsep=0.4ex}
\setlist[enumerate]{topsep=0ex,itemsep=0ex,parsep=0.4ex}
\definecolor{lightseagreen}{rgb}{0.13, 0.7, 0.67}
\declaretheorem[name = Theorem, numberwithin = section, style = plain]{theorem}
\declaretheorem[name = Corollary, numberlike = theorem, style = plain]{corollary}
\declaretheorem[name = Definition, numberlike = theorem, style = definition]{definition}
\declaretheorem[name = Lemma, numberlike = theorem, style = plain]{lemma}
\declaretheorem[name = Problem, numberlike = theorem, style = plain]{problem}
\declaretheorem[name = Remark, numberlike = theorem, style = definition]{remark}
\crefname{observation}{Observation}{Observations}
\crefname{conjecture}{Conjecture}{Conjectures}
\crefname{claim}{Claim}{Claims}
\crefname{problem}{Problem}{Problems}
\DeclareFontFamily{U}{matha}{\hyphenchar\font45}
\DeclareFontShape{U}{matha}{m}{n}{
	<5> <6> <7> <8> <9> <10> gen * matha
	<10.95> matha10 <12> <14.4> <17.28> <20.74> <24.88> matha12
}{}
\DeclareSymbolFont{matha}{U}{matha}{m}{n}
\DeclareMathSymbol{\specialuparrow}{\mathrel}{matha}{"D2}
\DeclareMathSymbol{\specialrightarrow}{\mathrel}{matha}{"D1}
\renewcommand*{\backref}[1]{}
\renewcommand*{\backrefalt}[4]{
	\ifcase #1 Not cited.%
	\or $\specialuparrow$#2%
	\else $\specialuparrow$#2%
	\fi%
}
\renewcommand{\epsilon}{\varepsilon}
\renewcommand{\geq}{\geqslant}
\renewcommand{\leq}{\leqslant}
\renewcommand{\emptyset}{\varnothing}
\renewcommand{\subset}{\subseteq}
\renewcommand{\supset}{\supseteq}
\DeclarePairedDelimiter{\abs}{\lvert}{\rvert}
\DeclarePairedDelimiter{\set}{\{}{\}}
\newcommand{\defn}[1]{\textcolor{Maroon}{\emph{#1}}}
\newcommand{\bN}{\mathbb{N}}
\newcommand{\bZ}{\mathbb{Z}}
\newcommand{\cB}{\mathcal{B}}
\newcommand{\cH}{\mathcal{H}}
\newcommand{\cK}{\mathcal{K}}
\newcommand{\cO}{\mathcal{O}}
\newcommand{\cR}{\mathcal{R}}
\title{When $t$-intersecting hypergraphs admit bounded $c$-strong colourings}
\date{\today}
\begin{document}

\author{
Kevin Hendrey\footnotemark[1]\qquad Freddie Illingworth\footnotemark[2]\\ Nina Kam\v{c}ev\footnotemark[3]\qquad Jane Tan\footnotemark[4]
}

\maketitle

\begin{abstract}
    The $c$-strong chromatic number of a hypergraph is the smallest number of colours needed to colour its vertices so that every edge sees at least $c$ colours or is rainbow. 
    We show that every $t$-intersecting hypergraph has bounded $(t + 1)$-strong chromatic number, resolving a problem of Blais, Weinstein and Yoshida.
    In fact, we characterise when a $t$-intersecting hypergraph has large $c$-strong chromatic number for $c\geq t+2$.
    Our characterisation also applies to hypergraphs which exclude sunflowers with specified parameters.
\end{abstract}


\renewcommand{\thefootnote}{\fnsymbol{footnote}} 

\footnotetext[0]{\emph{2020 MSC}: 05C15 (Colouring of graphs and hypergraphs)} 

\footnotetext[1]{Discrete Mathematics Group, Institute for Basic Science,
Daejeon, South Korea (\textsf{\href{mailto:kevinhendrey@ibs.re.kr}{kevinhendrey@ibs.re.kr}}). Supported by the Institute for Basic Science (IBS-R029-C1)}
\footnotetext[2]{Department of Mathematics, University College London, UK (\textsf{\href{mailto:f.illingworth@ucl.ac.uk}{f.illingworth@ucl.ac.uk}}). Research supported by EPSRC grant EP/V521917/1 and the Heilbronn Institute for Mathematical Research.}
\footnotetext[3]{Department of Mathematics, Faculty of Science, University of Zagreb, Croatia (\textsf{\href{mailto:nina.kamcev@math.hr}{nina.kamcev@math.hr}}). Supported by the Croatian Science Foundation, project number HRZZ-IP-2022-10-5116 (FANAP).}
\footnotetext[4]{All Souls College, University of Oxford, UK (\textsf{\href{mailto:jane.tan@maths.ox.ac.uk}{jane.tan@maths.ox.ac.uk}}).}

\renewcommand{\thefootnote}{\arabic{footnote}} 

\section{Introduction}

The study of hypergraph colourings has long been extensive and fruitful, encompassing not only some fundamental results within combinatorics but also many results that have far-reaching theoretical and practical applications. Part of this richness comes from the fact that there are a multitude of ways in which one can generalise the notion of proper vertex-colourings of graphs to hypergraphs across different settings. Some classical variants of hypergraph colourings are discussed in \cite[Chapter 4]{Berge}, although numerous others have also garnered interest (for instance \cite{BT09, KZ21, PT09}). Among all of these, there are two predominant definitions in the literature: a \defn{weak} colouring of a hypergraph is one for which there are no monochromatic edges so that each edge see at least two different colours, and a \defn{strong} colouring is one for which every edge is \defn{rainbow} meaning no two vertices in an edge have the same colour. 

Interpolating between weak and strong colourings, a \defn{$c$-strong} colouring of a hypergraph $\cH$ is an assignment of colours to the vertices of $\cH$ such that every edge $e\in E(\cH)$ contains vertices with at least $\min\set{c, \abs{e}}$ distinct colours. 
When $c = 2$ this recovers the notion of a weak colouring, whilst a strong colouring corresponds to being $\infty$-strong (or $c$-strong for $c = \max_{e\in E(\cH)} \abs{e}$). The \defn{$c$-strong chromatic number of $\cH$}, denoted \defn{$\chi(\cH, c)$}, is the smallest number of colours needed in a $c$-strong colouring of $\cH$. Despite being a particularly natural third variant, results concerning $c$-strong colourings are surprisingly recent and few. These exist in the context of generalised chromatic polynomials \cite{DBpoly, DBLpoly}, random uniform hypergraphs \cite{Sha21}, and intersecting hypergraphs, the last of which has received the most attention and is also our starting point.


A hypergraph $\cH$ is \defn{$t$-intersecting} if for every pair of edges $e, f \in E(\cH)$ we have $\abs{e \cap f} \geq t$. In 2012, Blais, Weinstein and Yoshida \cite{BWY14} defined the parameter $\chi(t, c)$ to be the minimum number of colours that suffice to $c$-strong colour every $t$-intersecting hypergraph (write $\chi(t, c) = \infty$ if this is unbounded), and posed the problem of determining this value for all combinations of $t$ and $c$. Classical work of Erd\H{o}s and Lov\'asz~\cite{EL75} solves the weak colouring case (that is, $c = 2$) as they showed that 
$\chi(0, 2) = \infty$, $\chi(1, 2) = 3$, and $\chi(t, 2) = 2$ for every $t \geq 2$.

Extending these results, Blais, Weinstein and Yoshida showed that $\chi(t, c)$ is finite when $t \geq c$ or $(t, c) = (1, 2)$ and unbounded when $t \leq c - 2$.
This leaves a conspicuous boundary case, when $t=c-1$.
\begin{problem}[\cite{BWY14}]\label{finiteproblem}
    Determine whether $\chi(c - 1, c)$ is finite or not for every $c > 2$.
\end{problem}

Since being posed, only the $c=3$ case of this problem has been resolved.
 Chung~\cite{Chung2013} showed that $\chi(2, 3)\leq 21$, and  
Colucci and Gy\'{a}rf\'{a}s~\cite{CG13} independently established finiteness with the precise value $\chi(2, 3) = 5$. 

Blais, Weinstein and Yoshida's proof that $\chi(c, c)$ is finite used a random colouring. 
However, Alon~\cite{Alon2013} showed that for every integer $N$ there are $(c - 1)$-intersecting hypergraphs such that the probability that a random $N$-colouring is $c$-strong is arbitrarily small. Thus, uniformly random colourings cannot be used to solve \cref{finiteproblem} and a new approach is needed.
In this paper, we answer the problem fully.
\begin{restatable}{theorem}{mainfiniteness}
\label{thm:mainfiniteness}
    $\chi(c - 1, c)$ is finite for all $c \in \bZ^+$.
\end{restatable}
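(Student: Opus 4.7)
The plan is to prove \cref{thm:mainfiniteness} by induction on $c$. The base cases $c \leq 3$ are covered by the results cited in the introduction ($\chi(0,1) = 1$ trivially, $\chi(1,2) = 3$ by Erd\H{o}s--Lov\'asz, and $\chi(2,3) = 5$ by Chung and by Colucci--Gy\'arf\'as). Assume the inductive hypothesis that $\chi(c-2, c-1) \leq N$ for some constant $N = N(c-1)$, and let $\cH$ be an arbitrary $(c-1)$-intersecting hypergraph.

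\textbf{Two easy reductions.} If some $(c-1)$-element set $K_0$ lies inside every edge of $\cH$, colour $K_0$ rainbow with $c - 1$ colours and every remaining vertex with a single new colour. Any edge of size $c - 1$ must equal $K_0$ and is rainbow, while every larger edge sees all $c - 1$ colours of $K_0$ plus the new colour; this uses $c$ colours. If instead only a single vertex $v$ is common to every edge, the link $\cH_v = \{e \setminus \{v\} : e \in E(\cH)\}$ is $(c-2)$-intersecting, so by the inductive hypothesis it admits a $(c-1)$-strong colouring with $N$ colours; assigning $v$ a fresh colour yields a $c$-strong colouring of $\cH$ with $N + 1$ colours. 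It remains to handle the case $\bigcap_{e \in E(\cH)} e = \emptyset$.

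\textbf{The main case.} If every pair of edges in $\cH$ intersects in at least $c$ vertices, then $\cH$ is $c$-intersecting and the Blais--Weinstein--Yoshida bound $\chi(c, c) < \infty$ applies. Otherwise, fix a pair $e_1, e_2 \in E(\cH)$ with $|e_1 \cap e_2| = c - 1$ and set $Y = e_1 \cap e_2$. The aim is to extend $\{e_1, e_2\}$ to a sunflower $\{S_1, \dots, S_k\}$ with kernel $Y$ and $k \geq c$ pairwise disjoint petals $P_i = S_i \setminus Y$. Given such a sunflower, colour $Y$ rainbow with colours $1, \dots, c - 1$, colour each petal $P_i$ monochromatically with colour $c - 1 + i$, and colour every other vertex with a single new colour $2c$, for a total of $2c$ colours. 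For any edge $f \in E(\cH)$, the $(c-1)$-intersection property forces either $Y \subseteq f$ (so $f$ sees the $c - 1$ kernel colours together with a further colour from $f \setminus Y$) or $|f \cap Y| \leq c - 2$ (so $|f \cap P_i| \geq 1$ for every $i$, whence $f$ meets all $c$ petals and sees all $c$ petal colours).

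\textbf{Main obstacle.} The crux is the sunflower extension: a naive greedy construction from the pair $\{e_1, e_2\}$ may halt with $k < c$ petals, because there might be no edge $S_{k+1} \in E(\cH)$ that contains $Y$ and avoids the current petals. When this happens one must extract structure from the failure --- for instance, by showing that most edges of $\cH$ miss $Y$ in a controlled way and then either uncovering a smaller common core (which falls under the easy reductions) or passing to a well-chosen sub-hypergraph on which a strengthening of the inductive hypothesis applies. I expect this to be where the proof leans on the ``sunflowers with specified parameters'' theme highlighted in the abstract, and it is the step most likely to require substantial new ideas beyond the structural reductions above.
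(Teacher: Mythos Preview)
Your proposal is an honest outline that correctly identifies where the difficulty lies, but the ``Main obstacle'' you flag is not a technical detail to be filled in --- it is the entire content of the theorem. The easy reductions and the sunflower-based colouring are standard and correct (modulo the minor omission that you treat only common intersections of size $0$, $1$ or $c-1$; the intermediate sizes follow by the same link argument). But the existence of a sunflower with kernel $Y$ and $c$ petals is in no way guaranteed: already for $c = 3$, the complete $3$-uniform hypergraph on four vertices is $2$-intersecting with empty common intersection and has pairs meeting in exactly two vertices, yet contains no sunflower with three petals. Your suggested fallback --- ``extract structure from the failure'' or ``pass to a well-chosen sub-hypergraph on which a strengthening of the inductive hypothesis applies'' --- is not a proof, and there is no indication that any simple iteration of this kind terminates. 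Note too that your colouring, if it worked, would give $\chi(c-1,c) \le 2c$, whereas the paper's bound is a tower of exponentials; this alone should make you suspicious that a one-shot sunflower argument suffices.

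The paper's route is substantially different. Rather than hunting for a single sunflower with a prescribed kernel, it proves the stronger \cref{thm:general} by induction on $t$, building a long \emph{split-degenerate} sequence of edges via iterated product colourings (\cref{lemma:findksplitdegen}), then using two layers of Ramsey arguments to extract from it a nested structure called a \emph{bromeliad} (\cref{lemma:findflower}). The final contradiction comes from iterating this to produce a sequence of bromeliads, each chosen $\prec$-minimal in a partial order on bromeliads, and then exhibiting a ``diagonal'' bromeliad that violates one of these minimality choices. Sunflower-based colourings of the flavour you describe do appear in the paper, but only in \cref{sec:extremal} as a \emph{consequence} of the main theorem (via \cref{cor:excludedsunflower}), not as a tool to prove it --- which is precisely why your approach, as written, is circular.
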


This theorem completes the characterisation initiated in \cite{BWY14}: $\chi(t, c)$ is finite if and only if $t\geq c - 1$. The bound on $\chi(c - 1, c)$ yielded by our proof is superexponential and we have made no attempt to optimise it.

\Cref{thm:mainfiniteness} will be a consequence of our stronger main theorem, which is more general in two ways and motivated as follows. One reason that $\chi(t, t + \ell) = \infty$ for $\ell \geq 2$ is that one may take a hypergraph with large $\ell$-strong chromatic number (e.g.\ a graph with large chromatic number) and add a fixed set $S$ of $t$ vertices to each hyperedge. We show that this is the \emph{only} reason that a $t$-intersecting hypergraph can have large $(t + \ell)$-strong chromatic number. To this end, for a hypergraph $\cH$, define the \defn{link} of a subset of vertices $S \subset V(H)$ to be the hypergraph \defn{$\cH_S$} with vertex set $V(\cH_S) = V(\cH) \setminus S$ and edge set $\set{e\setminus S \colon S\subset e\in \cH}$. The preceding construction relied on the simple fact that $\chi(\cH, t + \ell) \geq \max_{\abs{S} = t} \chi(\cH_S, \ell)$. Our main theorem, \cref{thm:general}, gives a converse: if $\cH$ is $t$-intersecting and $\chi(\cH_S, \ell)$ is bounded for all sets $S$ of size $t$, then $\chi(\cH, t + \ell)$ is bounded. Now $\chi(\cH', 1) = 1$ for every hypergraph $\cH'$ and so \cref{thm:mainfiniteness} follows immediately.

The second way that \cref{thm:general} generalises \cref{thm:mainfiniteness} is by weakening the assumption that the hypergraph is $t$-intersecting; we will instead require that it does not contain a sunflower of a certain size. Recall that a \defn{sunflower} is a hypergraph with edges $e_1, \dotsc, e_m$ such that if $v\in e_i \cap e_j$ for distinct $i, j \in [m]$, then $v$ is contained in all of the edges. The common intersection $\bigcap_{i \in [m]} e_i$ is the \defn{kernel} of the sunflower, and the \defn{petals} are the pairwise disjoint sets $e_j \setminus \bigcap_{i \in [m]} e_i$ for $j \in [m]$. Note that a hypergraph is $t$-intersecting if and only if it does not contain a sunflower with two petals and kernel of size at most $t - 1$. \Cref{thm:general} replaces `two petals' with `$p$ petals'. That is, we prove:

\begin{restatable}{theorem}{thmgeneral}
\label{thm:general}
    For all non-negative integers $t$, $\ell$, $p$ and $\chi$ with $p \geq 2$ there is some $K$ such that the following holds. Suppose that $\cH$ is a hypergraph such that
    \begin{itemize}[noitemsep]
        \item $\cH$ does not contain a sunflower with $p$ petals and kernel of size at most $t - 1$\textup{;}
        \item for all sets $S$ of $t$ vertices, $\chi(\cH_S, \ell) \leq \chi$.
    \end{itemize}
    Then $\chi(\cH, t + \ell) \leq K$.
\end{restatable}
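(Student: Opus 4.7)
The plan is to induct on $t$. The base case $t = 0$ is immediate: any sunflower's kernel has non-negative size, so the sunflower hypothesis is vacuous, and the remaining hypothesis $\chi(\cH, \ell) \leq \chi$ gives $K = \chi$.

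For the inductive step, I would first verify that each link $\cH_v$ satisfies the hypotheses at level $t - 1$: a $p$-sunflower in $\cH_v$ with kernel of size at most $t - 2$ lifts to a $p$-sunflower in $\cH$ with kernel enlarged by $v$, of size at most $t - 1$, which is forbidden; and for any $(t - 1)$-set $S' \subset V(\cH_v)$, we have $(\cH_v)_{S'} = \cH_{S' \cup \{v\}}$ with $|S' \cup \{v\}| = t$, so its $\ell$-strong chromatic number is at most $\chi$. The inductive hypothesis thus provides a uniform bound $\chi(\cH_v, t + \ell - 1) \leq K_{t-1}$, and I fix $(t + \ell - 1)$-strong colourings $c_v \colon V(\cH) \setminus \{v\} \to [K_{t-1}]$ for each $v$.

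To combine these link colourings, I observe that for any edge $e$ and any distinguished vertex $v^* \in e$, the assignment $u \mapsto c_{v^*}(u)$ on $e \setminus \{v^*\}$ together with a fresh colour at $v^*$ gives $\min(\abs{e}, t + \ell)$ distinct colours on $e$. Hence if $T \subset V(\cH)$ is a transversal of bounded size, the tuple-valued assignment $u \mapsto (c_v(u))_{v \in T}$, placing a fresh marker in the coordinate $v = u$, is a $(t + \ell)$-strong colouring of $\cH$ using at most $(K_{t-1} + 1)^{\abs{T}}$ colours. Such an exponential blow-up per step of the induction on $t$ is consistent with the super-exponential bound the authors mention.

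The main obstacle is that the hypotheses do not directly bound $\tau(\cH)$: even in the intersecting case $(t, p) = (1, 2)$, projective planes give intersecting hypergraphs with arbitrarily large transversal number. To overcome this, I would exploit the stronger structural consequence of the sunflower condition, namely that $\cH_S$ has matching number at most $p - 1$ for every $S$ with $\abs{S} \leq t - 1$, together with iterated applications of the classical Erd\H{o}s--Rado sunflower lemma. Any sufficiently large sunflower in $\cH$ must have kernel of size at least $t$, and the edges through any such kernel are controlled by the $\ell$-strong colouring of the corresponding link. A careful peeling argument should reduce the problem, at each layer, to a residual hypergraph with a bounded transversal, to which the tuple-valued construction above applies. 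Making this peeling work cleanly, and stitching the layered colourings together into a single bounded colouring of $\cH$, is where I expect the bulk of the technical work to lie.
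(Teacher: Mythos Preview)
Your induction setup, base case, and verification that each $\cH_v$ inherits the hypotheses at level $t-1$ are correct and match the paper; the paper also invokes the induction hypothesis in exactly this way to obtain $(t+\ell-1)$-strong colourings $c_v$ of the links. Your observation that a bounded transversal $T$ would let you combine the $c_v$ into a $(t+\ell)$-strong colouring via a product is likewise sound.

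The gap is in your proposed workaround for the absence of a bounded transversal. The Erd\H{o}s--Rado sunflower lemma requires bounded edge size, which is not part of the hypotheses; the hard instances (already for $t=1$, $p=2$) have arbitrarily large edges and arbitrarily large transversal number simultaneously, so there is no uniformity parameter to feed into that lemma. Even when a sunflower with kernel $S$ of size at least $t$ is available, the only control you have on $\cH_S$ is the bound $\chi(\cH_S,\ell)\le\chi$, which says nothing about the transversal number of the edges through $S$ or of the residual hypergraph; no ``peeling'' step that reduces the transversal is suggested by this, and I do not see how to start such an iteration. The paper avoids transversals altogether. It defines \emph{$k$-split-degenerate sequences} of edges (each new edge meets at most $k$ Venn regions of the previous edges and contains none of them), and shows---using a regional colouring together with the $c_v$ only for vertices lying in \emph{small} regions---that any edge receiving fewer than $t+\ell$ colours extends such a sequence. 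Iterating yields arbitrarily long split-degenerate sequences; a two-stage Ramsey argument then extracts from any long enough sequence a \emph{bromeliad} (a strictly nested chain of cores with pairwise disjoint petals). The proof concludes by building a chain of $\prec$-minimal bromeliads in successively pruned subhypergraphs and exhibiting a ``diagonal'' bromeliad that violates one of those minimality choices. The $c_v$ are thus used only locally, never glued along a global transversal.
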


We will prove \cref{thm:general} in \cref{sec:mainproof} after first introducing the key objects and ideas in \cref{sec:setup}.
As remarked above, the second hypothesis is necessary. The first hypothesis is also necessary in the sense that $t - 1$ cannot be replaced by $t - 2$. Indeed, let $G$ be a graph of large chromatic number, $T$ a set of $t - 1$ new vertices, and $\cH$ the $(t + 1)$-uniform hypergraph whose edges are $T \cup e$ for $e \in E(G)$. Then $\chi(\cH, t + 1) \geq \chi(G)$, the link of each set of $t$ vertices is an independent set, and every sunflower has kernel of size at least $t - 1$. 

\Cref{thm:general} gives a qualitative characterisation of when a $t$-intersecting hypergraph has large $(t + \ell)$-strong chromatic number: when some set of $t$ vertices has a link whose $\ell$-strong chromatic number is large. Defining $\chi(\cH, t, \ell)$ to be the largest $\ell$-strong chromatic number of a link of $t$ vertices, we have $\chi(\cH, t, \ell) \leq \chi(\cH, t + \ell) \leq f_{t, \ell}(\chi(\cH, t, \ell))$ for some function $f_{t, \ell}$. In \cref{sec:extremal} we strengthen this quantitatively by giving both lower and upper bounds for $f_{t, \ell}$. In particular, we show that $f_{t, \ell}(x) = x^{\Theta(t^{\ell - 2})}$ for fixed $\ell$ and growing $t$.

\section{Proof outline and definitions}
\label{sec:setup}

In this section, we introduce the key objects used in the proof of \cref{thm:general} and end with a proof sketch.

Throughout this paper, we use the term \defn{colouring} to mean vertex-colouring. Explicitly, a colouring of a hypergraph $\cH$ is a function $c \colon V(\cH) \to \bN$. On the few occasions we wish to colour edges of a graph or hypergraph, we will use the term \defn{edge-colouring}.

Suppose that $\cH$ is a hypergraph as in the statement of the theorem and that is has very large $(t + \ell)$-strong chromatic number. We will consider various colourings of the vertices of $\cH$ and will need to be able to combine them. We do this with product colourings.

Given colourings $c_1, \dotsc, c_m \colon V(\cH) \to \bN$, their \defn{product colouring} $c^\times \colon V(\cH) \to \bN^m$ is the colouring given by
\begin{equation*}
    c^\times(v) = (c_1(v), \dotsc, c_m(v)).
\end{equation*}
Observe that $\abs{c^\times(V(\cH))}\leq \prod_{i=1}^{m}\abs{c_i(V(\cH))}$. 

The next idea is that any sequence of edges $e_1, \dotsc, e_L \in \cH$ split $V(\cH)$ into regions as follows.

\begin{definition}
\label{def:regions}
    For edges $e_1, e_2, \dotsc, e_L$ of a hypergraph $\cH$, the set of \defn{regions formed by $e_1$, \ldots, $e_L$}, denoted $\cR(e_1, \dotsc, e_L)$, is the set of non-empty intersections $\bigcap_{i \in I} e_i \cap \bigcap_{i \in [L] \setminus I} (V(\cH) \setminus e_i)$ where $I \subset \set{1, \dotsc, L}$.
\end{definition}

Note that regions form a partition of the vertex-set $V(\cH)$ into non-empty sets. 

Consider the following `regional' colouring of $\cH$: we colour the vertices so that each region $R \in \cR(e_1, \dotsc, e_L)$ receives $t + \ell$ colours (if the region has size less than $t + \ell$, then it is given a rainbow colouring) and we use different colour palettes on each region. The number of colours used in this colouring is bounded (at most $(t + \ell) 2^L$). This colouring may give some edges $t + \ell$ colours (these edges have now been coloured appropriately and can be subsequently ignored). However, since $\chi(\cH, t + \ell)$ is very large, the sub-hypergraph $\cH'$ of $\cH$ consisting of those edges receiving fewer than $t + \ell$ colours must still have large $(t + \ell)$-strong chromatic number. Which edges are in $\cH'$? Every such edge must intersect fewer than $t + \ell$ different regions. Furthermore, every such edge cannot entirely contain a region (we use induction and the properties of $\cH$ to deal with edges that entirely contain regions of size less than $t + \ell$). Motivated by this we make the following definition.

\begin{definition}
    A sequence of edges $e_1, e_2, \dotsc, e_\ell$ is \defn{$k$-split-degenerate} if for each $1 \leq j \leq \ell$:
    \begin{itemize}
        \item The edge $e_j$ intersects at most $k$ regions in $\cR(e_1, \dotsc, e_{j - 1})$;
        \item The edge $e_j$ does not contain any region $R \in \cR(e_1, \dotsc, e_{j - 1})$
    \end{itemize}
\end{definition}

\begin{remark}\label{rmk:ksplit}
    Every subsequence of a $k$-split-degenerate sequence is $k$-split-degenerate.
\end{remark}

By iterating the argument of the previous paragraph we show (\cref{lemma:findksplitdegen}) that, provided $\chi(\cH, t + \ell)$ is sufficiently large, $\cH$ contains a long $(t + \ell - 1)$-split-degenerate sequence.

While split-degenerate sequences are quite structured we clean them up further, using Ramsey's theorem, to show (\cref{lemma:findflower}) that they contain the following key structure.

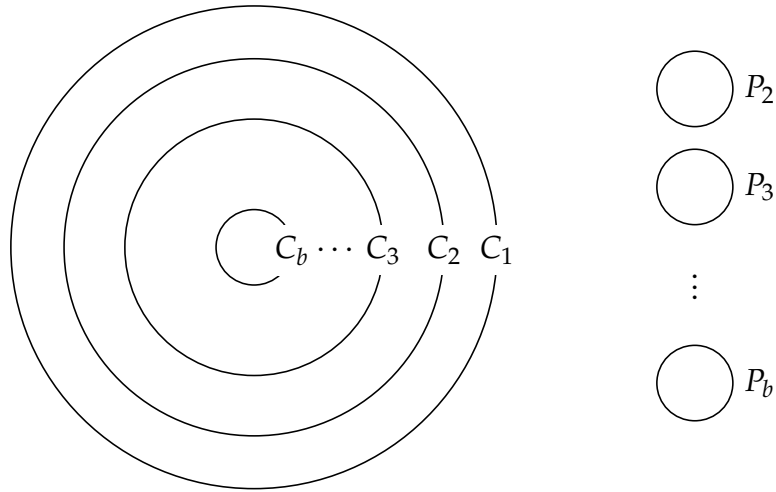
\begin{figure}[H]
    \centering
    \begin{tikzpicture}
        \tkzDefPoint(0,0){O}
        \tkzDefPoint(0.5,0){Cb}
        \tkzDefPoint(1.1,0){dots}
        \tkzDefPoint(1.7,0){C3}
        \tkzDefPoint(2.5,0){C2}
        \tkzDefPoint(3.2,0){C1}

        \tkzDefPoint(5.8,2.1){Q2}
        \tkzDefShiftPoint[Q2](0.5,0){P2}
        \tkzDefPoint(5.8,0.8){Q3}
        \tkzDefShiftPoint[Q3](0.5,0){P3}
        \tkzDefPoint(5.8,-0.5){dots1}
        \tkzDefPoint(5.8,-1.8){Qb}
        \tkzDefShiftPoint[Qb](0.5,0){Pb}

        \tkzDrawCircles(O,Cb O,C3 O,C2 O,C1 Q2,P2 Q3,P3 Qb,Pb)
        
        \tkzLabelPoint[fill=white, shift={(0,0.3)}](C1){$C_1$}
        \tkzLabelPoint[fill=white, shift={(0,0.3)}](C2){$C_2$}
        \tkzLabelPoint[fill=white, shift={(0,0.3)}](C3){$C_3$}
        \tkzLabelPoint[fill=white, shift={(0,0.3)}](Cb){$C_b$}
        \tkzLabelPoint[right](P2){$P_2$}
        \tkzLabelPoint[right](P3){$P_3$}
        \tkzLabelPoint[shift={(0,0.5)}](dots1){$\vdots$}
        \tkzLabelPoint[right](Pb){$P_b$}
        \tkzLabelPoint[shift={(0,0.15)}](dots){$\dots$}
    \end{tikzpicture}
    \caption{A $b$-bromeliad $(C_1, C_2 \sqcup P_2, \dots, C_b \sqcup P_b)$ witnessed by $(\set{C_1, \emptyset}, \set{C_2, P_2}, \dots, \set{C_b, P_b})$.}
\end{figure}

\begin{definition}
    \label{def:BF}
    Given a hypergraph $\cH$, a sequence of edges $\cB = (e_1, \dotsc, e_b)$ of $\cH$ is a \defn{$b$-bromeliad} if there are sets $P_i$ (\defn{petals}) and $C_i$ (\defn{cores}) such that
    \begin{itemize}
        \item for each $i$, $C_i$ and $P_i$ partition $e_i$;
        \item $e_1 = C_1 \supsetneq C_2 \supsetneq \dotsb \supsetneq C_b \neq \emptyset$;
        \item $P_1, \dotsc, P_b, C_1$ are pairwise disjoint.
    \end{itemize}
    We say that the sequence of partitions $(\set{C_1, P_1},\dots ,\set{C_b, P_b})$ witnesses that $\cB$ is a $b$-bromeliad.
\end{definition}

\begin{remark}\label{rmk:Bajan}
    Note that, for all $1\leq i < j \leq b$, $C_j = e_i \cap e_j$ and $P_j = e_j \setminus e_i$. Furthermore, $C_1 = e_1$ and $P_1 = \emptyset$.
    In particular, there is a unique witness for each bromeliad.
\end{remark}

We call $e_1$ the \defn{outer edge} of the bromeliad, and  $C_2 = e_2 \cap e_1$ the \defn{crown} of the bromeliad. A crucial ingredient at the end of our argument is the poset consisting of the family of $b$-bromeliads in a fixed hypergraph with partial order given by
\begin{equation*}
    (e_1, \dotsc, e_b) \prec (e_1' \dotsc, e_b') \text{\quad if \quad } \abs{e_2 \cap e_1} < \abs{e_2'\cap e_1'}.
\end{equation*}

\Cref{lemma:findksplitdegen,lemma:findflower} combine to show (\cref{cor:BF-compatible-gen}) that a hypergraph $\cH$ as in the statement of \cref{thm:general} contains a large Bromeliad. Iterating \cref{cor:BF-compatible-gen} allows us to construct a nested sequence of subhypergraphs $\cH_1 \supseteq \cH_2 \supseteq \dotsb \supseteq \cH_m$ and a sequence of $(t + \ell)$-bromeliads $\cB_1, \dotsc, \cB_m$ where, for each $i$, $\cB_i$ is minimum with respect to the partial order for $\cH_i$. These bromeliads will have additional properties (guaranteed by \cref{lemma:pruning}) which allow us to find a `diagonal' bromeliad (consisting of at most one edge from each of the $\cB_i$) which, for some $i$, precedes $\cB_i$ in the partial order given by $\cH_i$. This contradicts the minimality of $\cB_i$ and so shows that $\chi(\cH, t + \ell)$ is bounded.

\section{The proof of \texorpdfstring{\cref{thm:general}}{main theorem}}
\label{sec:mainproof}

We first show that the assumptions in \cref{thm:general} guarantee the existence of a long split-degenerate sequence. 

\begin{lemma}[Finding long split-degenerate sequences]\label{lemma:findksplitdegen}
    Let $t$, $\ell$, $p$, $\chi$, $L$ be positive integers, and assume that~\cref{thm:general} holds for $t_{\ref{thm:general}} = t-1$ and the given values of $\ell, p, \chi$. Then there is a positive integer $K_{\ref{lemma:findksplitdegen}}$ such that the following holds. Suppose that $\cH$ is a hypergraph and $e_1, \dotsc, e_L$ are edges of $\cH$ such that
    \begin{itemize}
        \item $\cH$ does not contain a sunflower with $p$ petals and kernel of size at most $t - 1$\textup{;}
        \item for all sets $S$ of $t$ vertices, $\chi(\cH_S, \ell) \leq \chi$\textup{;}
        \item the sequence $(e_1, \dotsc, e_L)$ is $(t + \ell - 1)$-split-degenerate\textup{;}
        \item $\chi(\cH, t + \ell) \geq K_{\ref{lemma:findksplitdegen}}$.
    \end{itemize}
    Then there is an edge $e_{L + 1} \in \cH$ such that the sequence $(e_1, \dotsc, e_{L + 1})$ is $(t + \ell - 1)$-split-degenerate.
\end{lemma}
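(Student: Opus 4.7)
The plan is to construct a product colouring $c^{\times}$ of $V(\cH)$ with palette size bounded by some function $f(L, t, \ell, p, \chi)$, such that every edge $e \in \cH$ failing to receive $\min(\abs{e}, t + \ell)$ colours under $c^{\times}$ both intersects at most $t + \ell - 1$ regions of $\cR(e_1, \ldots, e_L)$ and contains none of them. Taking $K_{\ref{lemma:findksplitdegen}} > f$, the hypothesis $\chi(\cH, t + \ell) \ge K_{\ref{lemma:findksplitdegen}}$ forces some failing edge $e_{L + 1}$ to exist (otherwise $c^{\times}$ would itself be $(t + \ell)$-strong), and by construction $(e_1, \ldots, e_{L + 1})$ is $(t + \ell - 1)$-split-degenerate.

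I build $c^{\times}$ as a product of three layers with pairwise-disjoint palettes. The base layer $c_0$ is the regional rainbow colouring: each $R \in \cR(e_1, \ldots, e_L)$ gets a fresh palette of size $\min(\abs{R}, t + \ell)$, assigned rainbow when $\abs{R} \le t + \ell$ and partitioned into $t + \ell$ non-empty classes otherwise. This uses at most $(t + \ell) \cdot 2^L$ colours and already $(t + \ell)$-strong colours every edge meeting $\ge t + \ell$ regions (one colour per region from disjoint palettes) or containing a region of size $\ge t + \ell$ (all $t + \ell$ classes are present). For each region $R$ of size $s \in [t, t + \ell - 1]$, the middle layer $c_R$ is derived from the link hypothesis: pick any $S \subseteq R$ with $\abs{S} = t$, take an $\ell$-strong colouring $c_S$ of $\cH_S$ (at most $\chi$ colours), and restrict it to $V(\cH) \setminus R$. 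For any $e \supseteq R$ the $\cH_S$-edge $e \setminus S = (R \setminus S) \cup (e \setminus R)$ sees $\ge \min(\ell, \abs{e \setminus S})$ colours, at most $\abs{R \setminus S} = s - t$ of which lie in $R \setminus S$; the restriction is therefore $((t + \ell) - s)$-strong on $\cH_R$, and pairing with $c_0$'s rainbow on $R$ gives every $e \supseteq R$ at least $s + ((t + \ell) - s) = t + \ell$ distinct product values (or rainbow when $\abs{e} < t + \ell$).

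For each region $R$ of size $s \in [1, t - 1]$, the top layer reduces the task to a single-vertex application of the inductive hypothesis. Pick any $v_R \in R$. The link $\cH_{v_R}$ satisfies the hypotheses of \cref{thm:general} with $t_{\ref{thm:general}} = t - 1$, since a sunflower in $\cH_{v_R}$ with $p$ petals and kernel of size $\le t - 2$ pulls back to one in $\cH$ with kernel of size $\le t - 1$; and for any $(t - 1)$-set $S'' \subseteq V \setminus \{v_R\}$, $(\cH_{v_R})_{S''} = \cH_{\{v_R\} \cup S''}$ with $\abs{\{v_R\} \cup S''} = t$ inherits the link bound $\le \chi$. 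Thus $\chi(\cH_{v_R}, t + \ell - 1) \le K^{*}$ for some $K^{*}$ depending only on $t, \ell, p, \chi$. A $(t + \ell - 1)$-strong colouring of $\cH_{v_R}$, together with a fresh colour for $v_R$, is a $(t + \ell)$-strong colouring of every edge of $\cH$ containing $v_R$, and hence of every edge containing $R$ by monotonicity; this adds at most $1 + K^{*}$ colours per small region.

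The key idea that makes the top layer work with only \cref{thm:general} for $t_{\ref{thm:general}} = t - 1$ available is the single-vertex reduction. A naive application of induction to the link $\cH_R$ of a region of size $s \ge 2$ would require \cref{thm:general} with parameter $t - s < t - 1$, which is not in our hypothesis; passing instead to a single vertex $v_R \in R$ and exploiting the fact that every edge containing $R$ also contains $v_R$ keeps the inductive parameter at exactly $t - 1$. Once all three layers are combined the total palette size is bounded by some finite $f(L, t, \ell, p, \chi)$, and we may set $K_{\ref{lemma:findksplitdegen}} = f(L, t, \ell, p, \chi) + 1$.
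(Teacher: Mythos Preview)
Your proof is correct and follows essentially the same architecture as the paper's: a product of the regional colouring with link-based colourings for small regions, where the key device is reducing to a single-vertex link $\cH_v$ so that only the $t_{\ref{thm:general}}=t-1$ case of the inductive hypothesis is invoked. The paper's version is slightly more uniform: it simply takes, for \emph{every} vertex $v$ lying in a region of size at most $t+\ell-1$, a $(t+\ell-1)$-strong colouring of $\cH_v$ via induction, and observes that any failing edge can then contain no vertex of any small region at all. Your three-layer split (handling regions of size in $[t,t+\ell-1]$ directly via the hypothesis $\chi(\cH_S,\ell)\le\chi$, and only invoking induction for regions of size $<t$, with one representative vertex per region) is a legitimate refinement that yields somewhat smaller constants, but it is not needed for the argument and the paper omits it in favour of brevity.
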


\begin{proof}
    Let $K \coloneqq K(t-1, \ell, p, \chi)$ be the constant obtained by applying~\cref{thm:general} with the specified parameters, and take $K_{\ref{lemma:findksplitdegen}} = (t + \ell) \cdot 2^LK^{(t+\ell - 1)2^L}+ 1$.

    We begin by defining some colourings of $\cH$. The edges that have fewer than $t + \ell$ colours with respect to the product of these colourings will extend the $(t + \ell - 1)$-split-degenerate sequence.
    
    Let $c_\cR$ be a \defn{$(t + \ell)$-regional colouring of $\cH$} with respect to $e_1, \dotsc, e_L$, defined as follows:
    \begin{itemize}
        \item for each region $R \in \cR(e_1, \dotsc, e_L)$ of size less than $t + \ell$, assign each vertex in $R$ a new colour;
        \item for each region $R \in \cR(e_1, \dotsc, e_L)$ of size at least $t + \ell$, take an arbitrary partition of $R$ into $t + \ell$ non-empty parts, and for each part assign one new colour to all the vertices of that part.
    \end{itemize}
    Let us call a region $R \in \cR(e_1, \dotsc, e_L)$ \defn{small} if $\abs{R} \leq t + \ell - 1$, and \defn{large} otherwise. Note that each small region is rainbow-coloured by $c_\cR$ and $t + \ell$ colours appear on each large region.
    The colouring $c_\cR$ uses at most $(t + \ell) \cdot 2^L$ colours. 

    We now also define a colouring $c_v$ of $\cH$ for each vertex $v$ in  a small region $S \in \cR(e_1, \dotsc, e_L)$. We will apply the induction hypothesis to the link hypergraph $\cH_v$.\footnote{$\cH_v$ denotes the link $\cH_S$ where $S = \set{v}$.} To this end, note that $\cH_v$ does not contain a sunflower with $p$ petals and kernel of size $t - 2$. Moreover, for every $S' \subset V(\cH)\setminus \set{v}$ with $\abs{S'} = t - 1$, the link graph of $S'$ in $\cH_v$ is $\cH_{\set{v}\cup S'}$, which satisfies
    $\chi(\cH_{\set{v} \cup S'}, \ell) \leq \chi$ by assumption. Thus $\cH_v$ satisfies the hypothesis of~\cref{thm:general} with $t_{\ref{thm:general}} = t - 1$, and so $\chi(\cH_v, (t-1) + \ell) \leq K$. Let $c_v$ be a $K$-colouring of $\cH$ whose restriction to $\cH_v$ is a $(t + \ell - 1)$-strong colouring.
    
    Let $c^\times$ be the product colouring of all of the colourings $c_v$ (over vertices $v$ in small regions) together with the regional colouring $c_\cR$. There are at most $(t+\ell - 1)2^L$ vertices $v$ in small regions and each colouring $c_v$ uses at most $K$ colours, so $c^\times$ uses at most $(t+\ell)2^{L}K^{(t+\ell - 1)2^L}$ colours. Since $\chi(\cH, t + \ell) > (t+\ell)2^{L}K^{(t+\ell - 1)2^L}$, $c^\times$ is not a $(t + \ell)$-strong colouring and so there is an edge $e'$ which sees at most $t + \ell - 1$ colours.

    First note that $e'$ intersects at most $t + \ell - 1$ regions of $\cR(e_1, \dotsc, e_L)$ since it picks up at least one new colour in the regional colouring from each region that it intersects. In addition, since all large regions have $t + \ell$ different colours in the regional colouring, $e'$ cannot contain any large region. 
    Finally, suppose that $e'$ contains some vertex $v$ of a small region. Then $c_v$ ensures that $c^\times$ assigns at least $t+\ell - 1$ colours to $e' \setminus \set{v}$, and $c_{\cR}$ ensures that each of those colours are distinct from $c^\times(v)$. Hence, $e'$ receives at least $t + \ell$ colours from $c^\times$, a contradiction. 

    Thus $e'$ intersects at most $t + \ell - 1$ regions from $\cR(e_1, \dotsc, e_L)$ and does not contain any region of $\cR(e_1, \dotsc, e_L)$. Hence $e_{L + 1} \coloneqq e'$ extends the $(t + \ell - 1)$-split-degenerate sequence $(e_1, \dotsc, e_L)$.
\end{proof}

We now show that sufficiently long split-degenerate sequences contain long bromeliads as subsequences.
\begin{lemma}[split-degenerate sequence to bromeliad]\label{lemma:findflower}
    For all positive integers $b$, $k$ and $p$ there is a positive integer $f_{\ref{lemma:findflower}}(b, k, p)$ such that the following holds. If $e_1$, \ldots, $e_{f_{\ref{lemma:findflower}}(b, k, p)}$ is a $k$-split-degenerate sequence that does not contain a matching of size $p$, then there are integers $1 \leq a(1) < a(2) < \dotsb < a(b) \leq f_{\ref{lemma:findflower}}(b, k, p)$ such that $e_{a(1)}, e_{a(2)}, \dotsc, e_{a(b)}$ is a $b$-bromeliad.
\end{lemma}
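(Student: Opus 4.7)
The plan is to apply Ramsey's theorem — first to pairs and then to triples of indices — to distill a highly structured subsequence, and then to exploit $k$-split-degeneracy to either extract a bromeliad or derive a contradiction.

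First, 2-colour each pair $\{i, j\}$ by whether $e_i \cap e_j = \emptyset$: a monochromatic red clique of size $p$ is a matching of size $p$, so Ramsey's theorem gives a pairwise intersecting subsequence of length $m_1$ once the sequence is long enough. Within this, 2-colour each triple $h < i < j$ by whether $e_h \cap e_j = e_i \cap e_j$ (Type A) or not (Type B), and apply hypergraph Ramsey to find a monochromatic subsequence $e_{j_1}, \ldots, e_{j_{m_2}}$. The crucial observation is that Type B subsequences must be short: letting $R_1, \ldots, R_r$ (with $r \leq k$) be the regions of $\cR(e_1, \ldots, e_{j_{m_2} - 1})$ that $e_{j_{m_2}}$ meets, each intersection $e_{j_s} \cap e_{j_{m_2}} = \bigcup_{i : R_i \subseteq e_{j_s}}(R_i \cap e_{j_{m_2}})$ is determined by the set $T_s \coloneqq \{i \in [r] : R_i \subseteq e_{j_s}\}$, and Type B forces the $T_s$ to be pairwise distinct subsets of $[r]$, giving $m_2 - 1 \leq 2^k$. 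So once $m_2 > 2^k + 1$, the monochromatic subsequence must be Type A.

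In Type A, $C_s \coloneqq e_{j_h} \cap e_{j_s}$ is well-defined (independent of $h < s$), and applying Type A to triples $(h, t, s)$ with $h < t < s$ gives $C_s \subseteq C_t$ whenever $t < s$, so the $C_s$ form a weakly decreasing chain. Group the indices $s \in \{2, \ldots, m_2\}$ by the value of $C_s$: each group is a sunflower with kernel equal to the common $C$-value. If this kernel is empty, the group is a matching and so has size at most $p - 1$; if it is non-empty, a third edge in the group would contain a non-empty refinement of the kernel as a region of $\cR$, contradicting split-degeneracy, so the group has size at most $2$. Taking $m_2 \geq 2b + p$ therefore guarantees at least $b - 1$ distinct non-empty values among the $C_s$; picking one representative $j_{s_t}$ from each of $b - 1$ such groups (with $s_2 < \dots < s_b$) and prepending $e_{j_1}$ as the outer edge yields the desired bromeliad $(e_{j_1}, e_{j_{s_2}}, \ldots, e_{j_{s_b}})$, where the pairwise intersection pattern $e_{j_{s_i}} \cap e_{j_{s_t}} = C_{s_t}$ is supplied exactly by Type A. Setting the Ramsey parameters accordingly produces a finite value of $f_{\ref{lemma:findflower}}(b, k, p)$.

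The main obstacle is the Type B short-length bound, where the $k$-split-degeneracy must be used quantitatively: reducing the potentially many candidate intersections with $e_{j_{m_2}}$ to only $2^k$ possibilities is what forces the Ramsey step into the useful Type A branch.
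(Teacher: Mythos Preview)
Your proof is correct and shares the paper's overall architecture: Ramsey on pairs to reach an intersecting subsequence, then $3$-uniform Ramsey with what turns out to be the identical $2$-colouring (the paper colours $i_1<i_2<i_3$ red when $(e_{i_1}\triangle e_{i_2})\cap e_{i_3}=\emptyset$, which is equivalent to $e_{i_1}\cap e_{i_3}=e_{i_2}\cap e_{i_3}$, i.e.\ your Type~A). The two differences are local. To exclude a large blue/Type~B clique, the paper applies a further $k$-colour Ramsey argument on pairs inside the clique to find a monochromatic triangle and then invokes $(A\triangle B)\cap(B\triangle C)\cap(C\triangle A)=\emptyset$; your pigeonhole bound $m_2-1\le 2^k$ via the sets $T_s$ is more direct and quantitatively sharper. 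In the red/Type~A clique, the paper simply keeps the whole clique and proves $C_{i+1}\subsetneq C_i$ immediately from split-degeneracy (pick any region $R\subseteq C_i$ of $\cR(e_{a(1)},\ldots,e_{a(i)})$; then $e_{a(i+1)}\not\supseteq R$), whereas you group by $C$-value, bound group sizes, and select representatives. Your route is correct but slightly roundabout: running your own sunflower argument with $e_{j_1}$ prepended (so that $C$ is already a region of $\cR(e_{j_1},e_{j_{s_1}})$) shows each group has size~$1$, so no thinning is actually needed.
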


\begin{proof}
    Let $G$ be the graph with vertex-set $\set{e_1, \dotsc, e_{f_{\ref{lemma:findflower}}(b, k, p)}}$ and with an edge between $e_i$ and $e_j$ if $e_i \cap e_j \neq \emptyset$. Since $e_1, \dotsc, e_{f_{\ref{lemma:findflower}}(b, k, p)}$ does not contain a matching with $p$ edges, $G$ does not contain an independent set of size $p$. By the Erd\H{o}s-Szekeres bound for Ramsey's theorem, $G$ contains a clique of size at least $f(b, k) \coloneqq f_{\ref{lemma:findflower}}(b, k, p)^{1/p}$. The subsequence of $e_i$ in this clique is $k$-split-degenerate (by \cref{rmk:ksplit}) and is intersecting (i.e.\ $e_i \cap e_j \neq \emptyset$ for all $i$, $j$). By relabelling, we may assume that $e_1, \dotsc, e_{f(b, k)}$ is an intersecting $k$-split-degenerate sequence.

    Consider the complete 3-uniform hypergraph $\cK^{(3)}$ on vertex set $\set{e_1, \dotsc,\allowbreak e_{f(b, k)}}$, and define an edge-colouring of this hypergraph using 2 colours as follows. For $i_1 < i_2 < i_3$, the edge $e_{i_1} e_{i_2} e_{i_3}$ is
    \begin{itemize}
        \item blue if $(e_{i_1} \triangle e_{i_2}) \cap e_{i_3} \neq \emptyset$;
        \item red if $(e_{i_1} \triangle e_{i_2}) \cap e_{i_3} = \emptyset$;
    \end{itemize}
    where $\triangle$ denotes the symmetric difference. 
    
    Suppose that there is a large blue clique on vertices $e_{j_1}$, $e_{j_2}$, \ldots, $e_{j_\ell}$ where $1 \leq j_1 < j_2 < \dotsb < j_\ell \leq f(b, k)$. We will show that such a clique cannot occur. To derive a contradiction, take an edge-colouring of the complete graph $\cK^{(2)}$ on vertex set $\set{j_1, \dotsc, j_{\ell - 1}}$ as follows. First, fix $1 \leq \alpha < \beta \leq \ell - 1$. Then $e_{j_\alpha} \triangle e_{j_\beta}$ is the union of some regions in $\cR(e_{j_1}, e_{j_2}, \dotsc, e_{j_{\ell - 1}})$. Since $e_{j_\alpha} e_{j_\beta} e_{j_\ell}$ is a blue edge, $e_{j_\ell}$ intersects $e_{j_\alpha} \triangle e_{j_\beta}$ and so there is some region $R \in \cR(e_{j_1}, e_{j_2}, \dotsc, e_{j_{\ell - 1}})$ such that $R \subset e_{j_\alpha} \triangle e_{j_\beta}$ and $e_{j_\ell}$ intersects $R$. Colour edge $j_\alpha j_\beta$ with colour $R$ (if there is more than one such region pick one arbitrarily). 
    
    By \cref{rmk:ksplit}, the sequence $e_{j_1}$, $e_{j_2}$, \ldots, $e_{j_\ell}$ is $k$-split-degenerate, and hence there are at most $k$ regions in $\cR(e_{j_1}, e_{j_2}, \dotsc, e_{j_{\ell - 1}})$ that $e_{j_\ell}$ intersects. Thus there are at most $k$ choices for $R$, so the colouring of the $\cK^{(2)}$ uses at most $k$ colours. By Ramsey's theorem, provided that $\ell$ is sufficiently large, there is a monochromatic triangle $j_\alpha j_\beta j_\gamma$. That is, there is a region $R$ such that $R \subset e_{j_\alpha} \triangle e_{j_\beta}$, $R \subset e_{j_\beta} \triangle e_{j_\gamma}$, $R \subset e_{j_\gamma} \triangle e_{j_\alpha}$, and $e_{j_\ell}$ intersects $R$. However, $(e_{j_\alpha} \triangle e_{j_\beta}) \cap (e_{j_\beta} \triangle e_{j_\gamma}) \cap (e_{j_\gamma} \triangle e_{j_\alpha}) = \emptyset$ (this is true for any three sets), so $R$ is empty, which is a contradiction. Thus there is no large blue clique in the colouring of $\cK^{(3)}$.

    Hence, by Ramsey's theorem, if $f(b, k)$ is sufficiently large then there must be a red clique $e_{a(1)}$, $e_{a(2)}$, \ldots, $e_{a(b)}$ in $\cK^{(3)}$ where $1 \leq a(1) < a(2) < \dotsb < a(b) \leq f(b, k)$. For $i \in \set{1, \dotsc, b}$, let
    \begin{align*}
        C_i & = e_{a(i)} \cap e_{a(1)}, \\
        P_i & = e_{a(i)} \setminus e_{a(1)}.
    \end{align*}
    We now show that $(\set{C_1, P_1}, \dots, \set{C_b, P_b})$ witnesses that $e_{a(1)}$, $e_{a(2)}$, \ldots, $e_{a(b)}$ is a $b$-bromeliad. 
    Firstly, since $e_1, \dotsc, e_{f(b, k)}$ is intersecting, $C_b \neq \emptyset$. Secondly, $C_1 = e_{a(1)}$ by definition. Thirdly,
    \begin{align*}
        C_{i + 1} \setminus C_i & = (e_{a(i + 1)} \cap e_{a(1)}) \setminus (e_{a(i)} \cap e_{a(1)}) = (e_{a(i + 1)} \cap e_{a(1)}) \setminus e_{a(i)} \\
        & = (e_{a(1)} \setminus e_{a(i)}) \cap e_{a(i + 1)} \subset (e_{a(1)} \triangle e_{a(i)}) \cap e_{a(i + 1)} = \emptyset,
    \end{align*}
    where the final equality is automatic for $i = 1$ and uses that the edge $e_{a(1)} e_{a(i)} e_{a(i + 1)}$ is red for $i \geq 2$. Thus $C_{i + 1} \subset C_i$ for each $i$. Next note that $C_i = e_{a(i)} \cap e_{a(1)}$ is the union of some regions in $\cR(e_{a(1)}, \dotsc, e_{a(i)})$. Choose one such region $R \subset C_i$. Since $e_{a(1)}, e_{a(2)}, \dotsc, e_{a(i + 1)}$ is $k$-split-degenerate by \cref{rmk:ksplit}, the edge $e_{a(i + 1)}$ does not contain $R$. It follows that $R \not\subset C_{i + 1}$, and so $C_i \neq C_{i + 1}$.
    Thus $C_{i + 1} \subsetneq C_i$ for each $i$.
    
    It just remains to verify pairwise disjointness of the petals and $C_1$. Since $C_1 = e_{a(1)}$, it is clear that this is disjoint from each $P_i$ by their definition. For $1 \leq i < j \leq b$, we have
    \begin{align*}
        P_i \cap P_j & = (e_{a(i)} \setminus e_{a(1)}) \cap (e_{a(j)} \setminus e_{a(1)}) \\
        & = (e_{a(i)} \setminus e_{a(1)}) \cap e_{a(j)} \subset (e_{a(1)} \triangle e_{a(i)}) \cap e_{a(j)} = \emptyset,
    \end{align*}
    where the last equality again uses the fact that the edge $e_{a(1)} e_{a(i)} e_{a(j)}$ is red. This completes the verification that $e_{a(1)}$, $e_{a(2)}$, \ldots, $e_{a(b)}$ is a $b$-bromeliad, as required.
\end{proof}

Let $(e_1, \dotsc, e_L)$ be a $k$-split-degenerate sequence on $V(\cH)$. We say that a set of edges $\cH'$ is \defn{$k$-compatible} (or simply \defn{compatible} when $k$ is clear from context)  with a sequence of edges $e_1, \dotsc, e_L$ if $e_1, \dotsc, e_L, e'$ is a $k$-split-degenerate sequence for every $e' \in \cH'$. This definition is usually applied in a situation where $\cH'$ is the edge set of a bromeliad.

\begin{corollary}
    \label{cor:BF-compatible-gen}
    Let $t$, $\ell$, $p$, $\chi$, $L$, $b$ be positive integers, and assume that \cref{thm:general} holds for all $t_{\ref{thm:general}} < t$ and the given values of $\ell$, $p$, $\chi$. Then there is some positive integer $K_{\ref{cor:BF-compatible-gen}}$ such that the following holds. Suppose that $\cH$ is a hypergraph and $e_1, \dotsc, e_L$ are edges of $\cH$ such that
    \begin{itemize}[noitemsep]
        \item $\cH$ does not contain a sunflower with $p$ petals and kernel of size at most $t - 1$\textup{;}
        \item for all sets $S$ of $t$ vertices, $\chi(\cH_S, \ell) \leq \chi$\textup{;}
        \item the sequence $(e_1, \dotsc, e_L)$ is $(t + \ell - 1)$-split-degenerate\textup{;}
        \item $\chi(\cH, t + \ell) \geq K_{\ref{cor:BF-compatible-gen}}$.
    \end{itemize}
    Then $\cH$ contains a $b$-bromeliad which is $(t + \ell - 1)$-compatible with $(e_1, \dotsc, e_L)$.
\end{corollary}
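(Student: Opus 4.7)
The plan is to iteratively apply \cref{lemma:findksplitdegen} to grow the given split-degenerate sequence to a sufficient length, and then invoke \cref{lemma:findflower} on the tail to locate a bromeliad. Compatibility then comes for free from \cref{rmk:ksplit}.

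More precisely, set $M \coloneqq f_{\ref{lemma:findflower}}(b, t + \ell - 1, p)$ and let
\begin{equation*}
    K_{\ref{cor:BF-compatible-gen}} \coloneqq \max_{L \leq L' \leq L + M - 1} K_{\ref{lemma:findksplitdegen}}(t, \ell, p, \chi, L'),
\end{equation*}
where $K_{\ref{lemma:findksplitdegen}}(t, \ell, p, \chi, L')$ denotes the constant produced by \cref{lemma:findksplitdegen} for a starting sequence of length $L'$. Since the hypotheses of \cref{lemma:findksplitdegen} (the sunflower condition, the link condition, and the inductive availability of \cref{thm:general} with $t_{\ref{thm:general}} = t - 1$) are all inherited from the hypotheses of \cref{cor:BF-compatible-gen}, I would apply \cref{lemma:findksplitdegen} a total of $M$ times, starting from $(e_1, \dotsc, e_L)$ and appending one edge at each step. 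At step $L'$ the required lower bound on $\chi(\cH, t + \ell)$ is at most $K_{\ref{cor:BF-compatible-gen}}$, so the iteration goes through and yields edges $e_{L+1}, \dotsc, e_{L+M}$ of $\cH$ such that $(e_1, \dotsc, e_{L + M})$ is $(t + \ell - 1)$-split-degenerate.

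By \cref{rmk:ksplit}, the tail $(e_{L + 1}, \dotsc, e_{L + M})$ is itself a $(t + \ell - 1)$-split-degenerate sequence of length $M$. Since $t \geq 1$, any matching of size $p$ is a sunflower with $p$ petals and kernel of size $0 \leq t - 1$, which is forbidden by hypothesis; hence the tail contains no matching of size $p$. \Cref{lemma:findflower} then furnishes indices $L < a(1) < a(2) < \dotsb < a(b) \leq L + M$ such that $\cB \coloneqq (e_{a(1)}, \dotsc, e_{a(b)})$ is a $b$-bromeliad in $\cH$.

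It remains to verify $(t + \ell - 1)$-compatibility. For each $i \in [b]$, the sequence $(e_1, \dotsc, e_L, e_{a(i)})$ is an order-preserving subsequence of the $(t + \ell - 1)$-split-degenerate sequence $(e_1, \dotsc, e_{L + M})$, and so is itself $(t + \ell - 1)$-split-degenerate by \cref{rmk:ksplit}. Thus the edge set of $\cB$ is $(t + \ell - 1)$-compatible with $(e_1, \dotsc, e_L)$, as required. The main obstacle is essentially just bookkeeping the constants so that a single $K_{\ref{cor:BF-compatible-gen}}$ suffices for all $M$ extension steps; the genuinely substantial work is already encapsulated in \cref{lemma:findksplitdegen} (which is where the induction on $t$ and the link hypothesis are used) and \cref{lemma:findflower} (the Ramsey-theoretic extraction of a bromeliad).
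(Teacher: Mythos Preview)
Your proof is correct and follows essentially the same approach as the paper: iterate \cref{lemma:findksplitdegen} $M = f_{\ref{lemma:findflower}}(b, t+\ell-1, p)$ times to extend the split-degenerate sequence, then apply \cref{lemma:findflower} to the tail and deduce compatibility from \cref{rmk:ksplit}. You supply more explicit bookkeeping (the max over $L'$ and the observation that $t\geq 1$ makes a matching a forbidden sunflower), but the skeleton is identical.
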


\begin{proof}
    Set $k = t + \ell - 1$ and let $M \coloneqq f_{\ref{lemma:findflower}}(b, k, p)$ be the integer guaranteed by \cref{lemma:findflower}.

    Provided $K_{\ref{cor:BF-compatible-gen}}$ is sufficiently large, we may repeatedly apply \cref{lemma:findksplitdegen} to find edges $e_{L + 1}$, \ldots, $e_{L + M}$ of $\cH$ such that $(e_1, \dotsc, e_{L + M})$ is $k$-split-degenerate. The hypergraph $\cH$ does not contain a sunflower with $p$ petals and kernel of size at most $t - 1$, so it does not contain a matching of size $p$.
    Thus, by the definition of $M$, the $k$-split-degenerate sequence $(e_{L + 1}, \dotsc, e_{L + M})$ contains a $b$-bromeliad as a subsequence. This $b$-bromeliad is $k$-compatible with $(e_1, \dotsc, e_L)$.
\end{proof}

Now that we can find a $b$-bromeliad $\cB$ in $\cH$ by \cref{cor:BF-compatible-gen}, our next lemma shows that there is a sub-hypergraph $\cH'$ of $\cH$ that has large $(t + \ell)$-strong chromatic number and whose edges have the additional property that they are disjoint from one of the petals of $\cB$.

\begin{lemma}[Pruning to good intersection pattern]\label{lemma:pruning}
    For all $b$ and $r$ there is some integer $K_{\ref{lemma:pruning}}$ such that the following holds. Let $\cH$ be a hypergraph with $\chi(\cH, b - 1) \geq K_{\ref{lemma:pruning}}$, and let $\cB = (e_1, \dotsc, e_b)$ be a $b$-bromeliad in $\cH$. Then there is a sub-hypergraph $\cH'$ of $\cH$ and an edge $e_j$ of $\cB$ with $j \geq 2$ such that $\chi(\cH', b - 1) \geq r$ and every edge of $\cH'$ is disjoint from the petal of $e_j$.
\end{lemma}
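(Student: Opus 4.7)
The plan is to classify edges of $\cH$ by which petals of $\cB$ they meet. For each $S \subseteq \set{2, \dotsc, b}$, let $\cH_S$ be the sub-hypergraph of $\cH$ whose edge set is
\begin{equation*}
    \set{e \in E(\cH) : \set{j \in \set{2, \dotsc, b} : e \cap P_j \neq \emptyset} = S}.
\end{equation*}
These $2^{b-1}$ classes partition $E(\cH)$. Since the petals are fixed subsets of $V(\cH)$, the product of $(b - 1)$-strong colourings of each $\cH_S$ is a $(b - 1)$-strong colouring of $\cH$, giving
\begin{equation*}
    \chi(\cH, b - 1) \leq \prod_{S \subseteq \set{2, \dotsc, b}} \chi(\cH_S, b - 1).
\end{equation*}

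The strategy is now to show that the single class $\cH_{\set{2, \dotsc, b}}$ has bounded $(b - 1)$-strong chromatic number; the product inequality above will then force some $S \neq \set{2, \dotsc, b}$ to satisfy $\chi(\cH_S, b - 1) \geq r$, and one can set $\cH' \coloneqq \cH_S$ and pick any $j \in \set{2, \dotsc, b} \setminus S$, giving an edge $e_j$ (with $j \geq 2$) such that every edge of $\cH'$ is disjoint from $P_j$ by construction.

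For the bound on the ``full'' class, note that every edge of $\cH_{\set{2, \dotsc, b}}$ meets each of the $b - 1$ pairwise disjoint petals $P_2, \dotsc, P_b$ and so contains at least $b - 1$ vertices. I would then colour each petal $P_j$ (for $j \in \set{2, \dotsc, b}$) with its own colour $j - 1$, and colour all remaining vertices arbitrarily, say with colour $1$. Every edge of $\cH_{\set{2, \dotsc, b}}$ picks up all $b - 1$ petal colours, so this is a $(b - 1)$-strong colouring of $\cH_{\set{2, \dotsc, b}}$ using $b - 1$ colours. Hence $\chi(\cH_{\set{2, \dotsc, b}}, b - 1) \leq b - 1$.

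Putting the pieces together: if $\chi(\cH_S, b - 1) \leq r - 1$ for every $S \neq \set{2, \dotsc, b}$, then
\begin{equation*}
    \chi(\cH, b - 1) \leq (b - 1)(r - 1)^{2^{b - 1} - 1},
\end{equation*}
so the choice $K_{\ref{lemma:pruning}} \coloneqq (b - 1)(r - 1)^{2^{b - 1} - 1} + 1$ suffices. The only non-bookkeeping step is the explicit colouring of $\cH_{\set{2, \dotsc, b}}$, which is short; everything else is a routine product-colouring pigeonhole over the $2^{b - 1}$ intersection patterns.
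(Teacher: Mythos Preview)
Your proof is correct and rests on the same two ingredients as the paper's: the petal colouring that takes care of edges meeting every $P_j$, together with a product-colouring pigeonhole argument over sub-hypergraphs of edges missing petals. The only difference is the granularity of the decomposition. You partition $E(\cH)$ into $2^{b-1}$ classes by exact intersection pattern with the petals, whereas the paper works with the $b-1$ \emph{overlapping} sub-hypergraphs $\cH_j \coloneqq \set{e \in E(\cH) : e \cap P_j = \emptyset}$ for $j \in \set{2,\dotsc,b}$, takes a $(b-1)$-strong $(r-1)$-colouring of each, and combines these with a single $b$-colouring $c_P$ (each petal gets its own colour, everything else gets one colour). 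This coarser decomposition yields the tighter constant $K_{\ref{lemma:pruning}} = b(r-1)^{b-1}+1$ in place of your $(b-1)(r-1)^{2^{b-1}-1}+1$, but otherwise the arguments are interchangeable.
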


\begin{proof}
    Set $K_{\ref{lemma:pruning}} = b (r - 1)^{b - 1} + 1$. For each $j \in \set{2, \dotsc, b}$, let $\cH_j$ denote the sub-hypergraph of $\cH$ consisting of edges in $\cH$ that are disjoint from the petal $P_j$ of $e_j$. If there is a $j$ such that $\chi(\cH_j, b - 1) \geq r$, then we may take $\cH' = \cH_j$. So suppose not, and for each $j$ choose a $(b - 1)$-strong colouring of $\cH_j$ using at most $r - 1$ colours, and extend it arbitrarily to an $(r - 1)$-colouring $c^j$ of $\cH$. Let $c_P$ be the $b$-colouring of $\cH$ such that for each $j\in \set{2, \dotsc, b}$ every vertex of $P_j$ gets colour $j$ and all vertices not in petals of $\cB$ get colour $1$.
    
    Let $c^\times$ denote the product of $c_P$ and all of the $c^j$. This uses less than $K_{\ref{lemma:pruning}}$ colours and so there is some edge $e$ that sees less than $b - 1$ colours. Now if $e$ intersects all of $P_2, \dotsc, P_b$, then it receives at least $b - 1$ colours from $c_P$, which is a contradiction. Thus $e$ must be disjoint from some $P_j$ where $j \in \set{2, \dotsc, b}$. But then $e \in \cH_j$, and so $e$ received at least $b - 1$ colours from $c^j$, a contradiction. Hence, there is some $j \in \set{2, \dotsc, b}$ with $\chi(\cH_j, b - 1) \geq r$, as required.
\end{proof}

We are now ready to prove our main theorem. 

\thmgeneral*
\begin{proof}
    We proceed by induction on $t$. When $t = 0$, the theorem holds with $K = \chi$ since the second bullet point just becomes $\chi(\cH, \ell) \leq \chi$.

    So let $t \geq 1$, and assume that the theorem holds for all $t' < t$. Let $k \coloneqq t + \ell - 1$, let $b \coloneqq t + \ell + 1$, and let $\Phi \coloneqq f_{\ref{lemma:findflower}}(b + 1, k, p)$ be the integer guaranteed by \cref{lemma:findflower}. Choose integers $1  \ll \chi_\Phi \ll \dots \ll \chi_0$ (we will see how large these should be later).
    Suppose for a contradiction that $\chi(\cH, t + \ell) > \chi_0$. Note that the first bullet point of the theorem statement implies that no subhypergraph of $\cH$ contains a matching of size $p$.

    We will inductively construct a sequence of hypergraphs $\cH = \cH_0 \supset \cH_1 \supset \dotsb \supset \cH_\Phi$, a sequence of $b$-bromeliads $\cB_1, \dotsc, \cB_\Phi$, a $k$-split-degenerate sequence of edges $(e_1, \dotsc, e_\Phi)$ of $\cH$ such that the following hold for all $j \in \set{1, \dotsc, \Phi}$:
    \begin{enumerate}[label={(\arabic*)}]
        \item\label{item:prunededge} $e_j \in \cB_j \subset \cH_{j - 1}$ and $e_j$ is not the outer edge of $\cB_j$;
        \item\label{item:compatible} $\cB_j$ is a $b$-bromeliad in $\cH_{j - 1}$ that is $k$-compatible with $(e_1, \dotsc, e_{j - 1})$;
        \item\label{item:minimal} subject to \ref{item:compatible}, $\cB_j$ is $\prec$-minimal (see \cref{def:BF});
        \item\label{item:disjointpetals} every edge of $\cH_j$ is disjoint from the petal of $e_j$ in $\cB_j$;
        \item\label{item:Uparrow} $\chi(\cH_j, b - 1) > \chi_j$.
    \end{enumerate}
    For $j\in \set{1,\dots,\Phi}$, our construction proceeds as follows. Provided $\chi_{j - 1}$ is sufficiently large, applying \cref{cor:BF-compatible-gen} to $\cH_{j - 1}$ and $(e_1, \dotsc, e_{j - 1})$ (this sequence of edges will be empty for $j = 1$) gives the existence of a $b$-bromeliad in $\cH_{j - 1}$ that is $k$-compatible with $(e_1, \dotsc, e_{j - 1})$. Let $\cB_j$ be a $\prec$-minimal one. Then, provided $\chi_{j - 1}$ is also sufficiently large compared to $\chi_j$, applying \cref{lemma:pruning} to $\cH_{j - 1}$ and $\cB_j$ gives an edge $e_j$ of $\cB_j$ (that is not the outer edge) and a subhypergraph $\cH_j$ of $\cH_{j - 1}$ that satisfy properties \ref{item:disjointpetals} and \ref{item:Uparrow}.\footnote{At this point, we have covered all of the steps that will ultimately affect the bound on $\chi(c-1, c)$ given by this proof. Note that the number of times we need to apply \cref{lemma:pruning} comes from \cref{lemma:findksplitdegen} which uses multiple applications of Ramsey's Theorem. Using known bounds for the Ramsey functions involved, it follows that our bound is of order $2^{2^{2^{2^{\textrm{poly}(c)}}}}$.}

    Now, for each $j$ the bromeliad $\cB_j$ is $k$-compatible with $(e_1, \dotsc, e_{j - 1})$ and $e_j \in \cB_j$, so $(e_1, \dotsc, e_\Phi)$ is a $k$-split-degenerate sequence. Recall that $\cH$ does not contain a matching of size $p$, so from the definition of $\Phi$ we have that $(e_1, \dotsc, e_\Phi)$ contains a $(b + 1)$-bromeliad $\cB = (e_{i_1}, \dotsc, e_{i_{b + 1}})$ witnessed by $(\set{C_{i_1}, P_{i_1}}, \dotsc, \set{C_{i_{b+1}}, P_{i_{b+1}}})$.
    Let us note the following set inclusion for future use:
    \begin{equation}\label{eq:C}
        e_{i_2} \cap e_{i_3} = C_{i_3} \subsetneq C_{i_2} = e_{i_1} \cap e_{i_2}. \tag{$\dagger$}
    \end{equation}
    By removing the first edge of $\cB$, we get another $b$-bromeliad $\cB' \coloneqq (e_{i_2}, \dotsc, e_{i_{b + 1}})$ witnessed by $(\set{e_{i_2}, \emptyset}, \set{C_{i_3}, P_{i_3}}, \dotsc, \set{C_{i_{b+1}}, P_{i_{b+1}}})$. Moreover, for $2 \leq j \leq b + 1$, we have $i_j > i_1$ and so $\cB'$ is a $b$-bromeliad in $\cH_{i_1 - 1}$ and is $k$-compatible with $(e_1, \dotsc, e_{i_1 - 1})$. That is, all of the edges in $\cB'$ were available at the time $\cB_{i_1}$ was chosen. To complete the proof, we will show that $\cB' \prec \cB_{i_1}$, thereby contradicting the $\prec$-minimality of $\cB_{i_1}$. 
    
    Firstly, let $C'_{i_1}$ and $P'_{i_1}$ be the petal and core of $e_{i_1}$ in $\cB_{i_1}$ respectively. Since $e_{i_1}$ is not the outer edge of $\cB_{i_1}$, we can be sure that $C'_{i_1}$ is a proper subset of the crown of $\cB_{i_1}$. The crown of $\cB'$ is $e_{i_2} \cap e_{i_3}$. Thus, by \eqref{eq:C}, to prove that $\cB' \prec \cB_{i_1}$ it suffices to show that $e_{i_1} \cap e_{i_2} \subset C'_{i_1}$.

    Now $e_{i_1} = C'_{i_1} \sqcup P'_{i_1}$ and each edge of $\cH_{i_1}$ is disjoint from $P'_{i_1}$. But $e_{i_2} \in \cH_{i_2 - 1} \subset \cH_{i_1}$ which implies that
    \begin{equation*}
        e_{i_2} \cap (e_{i_1} \setminus C'_{i_1}) = e_{i_2} \cap P'_{i_1} = \emptyset, 
    \end{equation*}
    so $e_{i_1} \cap e_{i_2} \subset C'_{i_1}$, which gives the desired contradiction. This establishes that in fact $\chi(\cH, t + \ell) = \chi(\cH, b - 1) \leq \chi_0$, so taking $K = \chi_0$ gives the result.
\end{proof}

\section{The \texorpdfstring{$(t + \ell)$}{t + l}-strong chromatic number of \texorpdfstring{$t$}{t}-intersecting hypergraphs}\label{sec:extremal}

\Cref{thm:general} demonstrates that the only obstacle to finding a $(t + \ell)$-strong colouring of a $t$-intersecting hypergraph with a bounded number of colours is that there may exist a set of $t$ vertices whose link has a high $\ell$-strong chromatic number.
This motivates the definition of a parameter \defn{$\chi(\cH, t, \ell)$} of a hypergraph $\cH$, where
\begin{equation*}
    \chi(\cH, t, \ell) \coloneqq \max \Big(\set{1} \cup \set[\Big]{\chi(\cH_S,\ell) \colon S \in \binom{V(\cH)}{t}}\Big).
\end{equation*}
Since we now know that there is a relationship between $\chi(\cH, t + \ell)$ and $\chi(\cH, t, \ell)$, the natural problem arises of determining the nature of this relationship. As a start, the following statement is a refinement of \cref{thm:mainfiniteness} for $(c - 2)$-intersecting hypergraphs with large $c$-strong chromatic number. 

\begin{theorem}\label{thm:c+2upper}
    For every integer $c\geq 2$, there is a constant $x_c$ such that if $\cH$ is a $(c - 2)$-intersecting hypergraph with $\chi(\cH, c) \geq x_c$, then $\chi(\cH, c) = \chi(\cH, c - 2, 2) + c - 2$.
\end{theorem}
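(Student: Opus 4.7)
The plan is to reduce the theorem to a structural lemma asserting that, for $x_c$ sufficiently large, a $(c-2)$-intersecting hypergraph $\cH$ with $\chi(\cH, c) \ge x_c$ admits a \emph{global kernel}: a set $S^* \subseteq V(\cH)$ of size $c - 2$ contained in every edge. Given such an $S^*$, for any other $(c-2)$-set $S \ne S^*$, every edge of $\cH_S$ contains the nonempty set $S^* \setminus S$, so colouring one vertex of $S^* \setminus S$ with one colour and everything else with another yields a $2$-strong colouring of $\cH_S$, giving $\chi(\cH_S, 2) \le 2$. Consequently $\chi(\cH_{S^*}, 2) = \chi(\cH, c-2, 2)$ whenever this quantity is at least $2$, which holds for $x_c$ sufficiently large (else $\cH$ has essentially no structure and $\chi(\cH, c)$ is bounded).

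For the upper bound, colour $S^*$ with $c - 2$ fresh colours (rainbow) and colour $V(\cH) \setminus S^*$ with an optimal $2$-strong colouring of $\cH_{S^*}$. Every edge $e \in \cH$ contains $S^*$ and so picks up $c - 2$ distinct colours from that part; if $|e| \ge c$, then $e \setminus S^* \in \cH_{S^*}$ contributes at least two more colours by the $2$-strong condition, bringing the total to at least $c$, while smaller edges are verified by inspection. This produces a $c$-strong colouring using $\chi(\cH, c-2, 2) + (c-2)$ colours.

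For the lower bound, let $c'$ be any $c$-strong colouring of $\cH$ using $N$ colours. The restriction of $c'$ to $V(\cH) \setminus S^*$ is a $2$-strong colouring of $\cH_{S^*}$, so uses at least $\chi(\cH, c-2, 2)$ colours. For each edge of the form $S^* \cup \{u, w\} \in \cH$ of size exactly $c$, the rainbow requirement forces $c'(S^*)$ to consist of $c - 2$ distinct colours, all disjoint from $\{c'(u), c'(w)\}$. Iterating over all $2$-edges of $\cH_{S^*}$, and handling vertices only in larger edges by a supplementary argument (for instance, by first passing to a sub-hypergraph on which the $2$-strong chromatic number of $\cH_{S^*}$ is realised by its $2$-edges alone), one obtains $c'(S^*) \cap c'(V(\cH) \setminus S^*) = \emptyset$, which gives $N \ge \chi(\cH, c-2, 2) + (c-2)$.

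The structural lemma is the main obstacle. I would prove it by contrapositive, driven by the bromeliad machinery underlying \cref{thm:general}. Applying \cref{thm:general} with $t = c - 2$, $\ell = 2$, $p = 2$ yields some $(c-2)$-set $S_0$ with $\chi(\cH_{S_0}, 2)$ very large. Any edge $f \in \cH$ not containing $S_0$ satisfies $|f \cap S_0| \le c - 3$, so by $(c-2)$-intersection $|f \cap e| \ge c - 2$ for every $e \supseteq S_0$; hence $f \setminus S_0$ is a vertex cover of $\cH_{S_0}$. Since large $2$-strong chromatic number precludes small vertex covers, this heavily restricts $f$. Eliminating all such bad edges --- presumably by iterating the bromeliad construction on sub-hypergraphs of edges containing candidate kernels and using pigeonhole over the finitely many $(c-2)$-sets within a fixed core --- forces a single $S^*$ contained in every edge. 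Converting the quantitative finiteness of \cref{thm:general} into this sharp qualitative existence of a global $(c-2)$-kernel is where the bulk of the technical effort lies.
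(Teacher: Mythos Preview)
Your central structural lemma is false: a $(c-2)$-intersecting hypergraph with arbitrarily large $\chi(\cH,c)$ need not have a global kernel of size $c-2$. For $c=3$, take $V=\{v,1,\dots,n\}$, let $\cH$ consist of all triples $\{v,i,j\}$ together with the single edge $f=\{1,\dots,n-1\}$. This is $1$-intersecting, and $\chi(\cH,3)=n+1$ since every triple must be rainbow; yet no vertex lies in every edge ($v\notin f$, and each $i\in[n]$ misses some triple). Your own argument explains why such $f$ must be large (it is forced to be a vertex cover of $\cH_{S_0}$, and large $\chi(\cH_{S_0},2)$ precludes small vertex covers), but ``large'' does not mean ``nonexistent'', so the kernel claim does not follow. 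Since both your upper and lower bound arguments are predicated on every edge containing $S^*$, the proof collapses at this point.

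The paper's proof does not attempt to eliminate edges missing $S$; instead it colours them directly. Choosing $S$ with $\chi(\cH_S,2)\ge 2c$ and taking an \emph{optimal} $2$-strong colouring of $\cH_S$, criticality gives, for each pair of colours $z_{2i-1},z_{2i}$ ($i\in[c]$), an edge $e_i$ of $\cH_S$ seeing only those two colours. Any edge $f\not\supseteq S$ has $\lvert f\cap S\rvert\le c-3$, so by $(c-2)$-intersection $f$ meets each $e_i\cup S$ outside $S$, hence meets each $e_i$, and therefore picks up at least one colour from each disjoint pair $\{z_{2i-1},z_{2i}\}$ --- giving $f$ at least $c$ colours. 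This criticality argument is the missing idea; your vertex-cover observation is the right first step but does not by itself control the colours seen by $f$. The paper's lower bound also avoids your ``colours on $S^*$ are disjoint from the rest'' claim (which is not true of an arbitrary $c$-strong colouring) by a recolouring trick: push any colour shared with $S$ onto a spare colour $z$ not used on $S$, preserving $c$-strongness on $\cH'$ and forcing the count.
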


\begin{proof}
    Let $K$ be the value given by applying \cref{thm:general} with $t = c - 2$, $\ell = 2$, $p = 2$, and $\chi = 2c - 1$. We claim that we may set $x_c$ to be $K+1$.
    Since $\cH$ is $(c - 2)$-intersecting and $\chi(\cH, c) \geq x_c > K$, there is some set $S$ of $c - 2$ vertices of $\cH$ such that $\chi(\cH_S,2) \geq 2c$. Let $S$ be the set of $c - 2$ vertices for which $\chi(\cH_S, 2)$ is largest. Note that $\chi(\cH_S) = \chi(\cH_S, 2) = \chi(\cH, c - 2, 2)$.

    First, we show that $\chi(\cH, c)\geq \chi(\cH_S)+c - 2$. Consider the subhypergraph $\cH'$ of $\cH$ consisting of all hyperedges containing $S$. Suppose for contradiction that $\cH'$ admits a $c$-strong colouring with $\chi(\cH_S)+c-3$ colours.
    At most $c - 2$ colours appear on $S$, so there is some colour $z$ that appears on $V(\cH')$ but not on $S$.
    We may assume that the colours used on $S$ are distinct from the colours used on $V(\cH') \setminus S$: if $v \in V(\cH') \setminus S$ receives a colour that appears on $S$, then changing the colour of $v$ to $z$ results in a $c$-strong colouring of $\cH'$ using the same number of colours.
    Thus, the number of colours used by vertices not in $S$ is at most $\chi(\cH_S)-1$.
    However, by the definition of $\chi(\cH_S)$, there is a hyperedge with at most $\abs{S} + 1 = c - 1$ distinct colours, a contradiction. Thus $\chi(\cH, c) \geq \chi(\cH', c) \geq \chi(\cH_S) + c - 2$.

    Now consider a $(\chi(\cH_S) + c - 2)$-colouring $\theta$ which extends an optimal colouring of $\cH_S$ by assigning a unique colour to every vertex in $S$.
    Note that every hyperedge containing $S$ sees at least $c$ colours. 
    Consider a hyperedge $e$ which does not contain $S$. Since $\cH$ is $(c - 2)$-intersecting, this edge $e$ intersects each edge containing $S$ in at least one vertex not in $S$.
    Let $z_1,z_2,\dots, z_{2c}$ be distinct colours assigned by $\theta$ to vertices not in $S$.
    Since the restriction of $\theta$ to $\cH_S$ is an optimal colouring, for each $i\in[c]$ there must be an edge $e_i$ of $\cH_S$ which sees only the colours $z_{2i-1}$ and $z_{2i}$, otherwise these colours could be merged into a single colour.
    But the edge $e$ intersects each such $e_i$, and so $e$ sees at least $c$ colours. 
    Thus $\theta$ is a $c$-strong colouring, and so $\chi(\cH, c) \leq \chi(\cH_S) + c - 2$, as required.
\end{proof}

In light of the preceding result, one might be tempted to conjecture that, for positive integers $t$ and $\ell$, every $t$-intersecting hypergraph $\cH$ of sufficiently large $(t + \ell)$-strong chromatic number satisfies $\chi(\cH, t + \ell)=\chi(\cH, t, \ell)+t$.
The following result demonstrates that this is not the case.
\begin{theorem}\label{thm:construction}
    Given positive integers $t$, $\ell$ and $K$ with $K, \ell\geq 2$, there is a hypergraph $\cH$ with the following properties\textup{:}
    \begin{itemize}
        \item $\cH$ is $t$-intersecting,
        \item $\chi(\cH, t + \ell)= K^{\binom{t + 2\ell - 4}{\ell - 2}} + t + 2\ell - 4$, and
        \item $\chi(\cH, t, \ell)\leq K^{\binom{2\ell - 4}{\ell - 2}} + 2\ell - 4$.
    \end{itemize}
\end{theorem}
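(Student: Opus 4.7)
The plan is to construct $\cH$ explicitly as a ``shared-core plus product-graph'' hypergraph. Let $U$ be a set of $t + 2\ell - 4$ distinguished vertices and set $N = \binom{t + 2\ell - 4}{\ell - 2}$; let $W = [K]^N$, where the $N$ coordinates are indexed by the $(\ell - 2)$-subsets of $U$. For each $B \in \binom{U}{\ell - 2}$, define the graph $G_B$ on $W$ by $ww' \in E(G_B)$ iff $w_B \neq w'_B$. Take $\cH$ to have vertex set $U \cup W$ and edge set consisting of the $(t + \ell)$-sets $(U \setminus B) \cup \set{w, w'}$ for all $B \in \binom{U}{\ell - 2}$ and $\set{w, w'} \in E(G_B)$; so $\cH$ is $(t + \ell)$-uniform. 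Any two edges share $U \setminus (B_1 \cup B_2)$, of size at least $(t + 2\ell - 4) - 2(\ell - 2) = t$, so $\cH$ is $t$-intersecting.

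For the upper bound on $\chi(\cH, t + \ell)$, I would colour $U$ injectively with $t + 2\ell - 4$ fresh colours and $W$ injectively with $K^N$ further fresh colours; since $\cH$ is $(t + \ell)$-uniform, it suffices that every edge is rainbow, which is immediate. For the matching lower bound, let $c$ be a $(t + \ell)$-strong colouring, so $c$ is rainbow on every edge. Since any two vertices of $U$ lie in some $U \setminus B$ (as $\abs{U} \geq \ell$), $c$ is injective on $U$. Next, for any $w \in W$ and $B \in \binom{U}{\ell - 2}$, there is some $w' \in W$ with $w'_B \neq w_B$ (using $K \geq 2$), and the edge $(U \setminus B) \cup \set{w, w'}$ forces $c(w) \notin c(U \setminus B)$; the union over $B$, combined with the fact that every vertex of $U$ lies in some $U \setminus B$, gives $c(W) \cap c(U) = \emptyset$. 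Finally, $\bigcup_B G_B$ is the complete graph on $W$ (distinct tuples differ in some coordinate), so $c$ is injective on $W$, and therefore $\abs{c(V(\cH))} \geq K^N + t + 2\ell - 4$.

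For the bound on $\chi(\cH, t, \ell)$, fix $S \in \binom{V(\cH)}{t}$. If $\abs{S \cap W} \geq 3$, no edge contains $S$, so $\cH_S$ is empty. If $\abs{S \cap W} \in \set{1, 2}$, each edge of $\cH_S$ contains at most one vertex of $W$, so colouring $U \setminus S$ injectively and giving every vertex of $W$ a single extra colour yields an $\ell$-strong colouring with at most $2\ell - 2$ colours. The main case is $S \subset U$: then $\cH_S$ has edges $((U \setminus S) \setminus B) \cup \set{w, w'}$ for $B \in \binom{U \setminus S}{\ell - 2}$ and $\set{w, w'} \in E(G_B)$. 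I would colour $U \setminus S$ injectively with $2\ell - 4$ colours and assign each $w \in W$ the tuple $(w_B)_{B \in \binom{U \setminus S}{\ell - 2}}$ drawn from a disjoint palette of $K^{\binom{2\ell - 4}{\ell - 2}}$ colours; vertices adjacent in $\bigcup_{B \subset U \setminus S} G_B$ differ in some $B$-coordinate, so each link edge is rainbow, giving $\chi(\cH_S, \ell) \leq K^{\binom{2\ell - 4}{\ell - 2}} + 2\ell - 4$. The trickiest step will be the lower bound on $\chi(\cH, t + \ell)$ — in particular, showing that $c(U)$ and $c(W)$ must be disjoint for every valid colouring — which crucially uses $K \geq 2$ to force every vertex of $W$ to be constrained by each $G_B$.
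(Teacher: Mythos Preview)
Your construction is exactly the paper's construction (your $(\ell-2)$-subsets $B$ of $U$ are the complements of the paper's $(t+\ell-2)$-subsets $S_i$ of $A$), and your arguments are essentially identical, with somewhat more detail in the lower bound for $\chi(\cH,t+\ell)$ where the paper simply notes that every pair of vertices lies in a common hyperedge. One small arithmetic slip: when $\abs{S\cap W}=2$ your described colouring uses $\abs{U\setminus S}+1=2\ell-1$ colours rather than $2\ell-2$ (this is also the bound the paper records), but this is harmless for the stated inequality.
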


\begin{proof}
    To construct such a hypergraph, let $\tau = \binom{t + 2\ell - 4}{\ell - 2}$, and let $A$ and $B$ be disjoint sets of vertices with $\abs{A} = t + 2\ell - 4$ and $\abs{B} = K^\tau$.
    Let $S_1,\dots S_{\tau}$ be the subsets of $A$ of size $t + \ell - 2$, and index the elements of $B$ by writing $B \coloneqq \set{b_{\sigma} \colon \sigma\in [K]^\tau}$.
    Let $\cH$ be the hypergraph whose edge set consists of all sets of the form $S_i\cup \set{b_{\sigma}, b_{\sigma'}}$ such that $\sigma$ and $\sigma'$ differ in their $i$-th coordinate.
    Then $\cH$ is $(t + \ell)$-uniform and every edge contains $t + \ell - 2$ vertices of $A$, so $\cH$ is $t$-intersecting.
    Note that every pair of vertices occur together in some hyperedge of size exactly $t + \ell$, and must therefore receive distinct colours in any $(t + \ell)$-strong colouring. Thus $\chi(\cH, t + \ell) = \abs{V(\cH)} = K^\tau + t + 2\ell - 4$.

    We now turn to colouring the links. Let $S \subseteq V(\cH)$ have size $t$. If $\abs{B \cap S} \geq 3$, then $\cH_S$ has no edges and so $\chi(\cH_S, \ell) = 1$. Next, suppose that $\abs{B \cap S} \in \set{1, 2}$. Rainbow colour $A \setminus S$ and give one further colour to all of $B \setminus S$. Every edge in $\cH_S$ is rainbow-coloured, so
    \begin{equation*}
        \chi(\cH_S, \ell) \leq 1 + \abs{A \setminus S} = 1 + \abs{A} - \abs{S} + \abs{B \cap S} \leq 2 \ell - 1.
    \end{equation*}
    The last possibility is that $\abs{B \cap S} = 0$, meaning $S \subseteq A$. Note that the set $I_S\subseteq [\tau]$ of indices $i$ such that $S\subseteq S_i$ has size exactly $\binom{2\ell - 4}{\ell - 2}$. The equivalence relation $\sim_S$ on $B$, defined by $b_\sigma \sim_S b_{\sigma'}$ if and only if $b_\sigma$ and $b_{\sigma'}$ agree on $I_S$, has $K^{\binom{2\ell - 4}{\ell - 2}}$ equivalence classes. Rainbow colour $A \setminus S$, and for each equivalence class of $\sim_S$ assign one new colour to all vertices in that class. Now every edge in $\cH_S$ is rainbow-coloured, so
    \begin{equation*}
        \chi(\cH_S, \ell) \leq \abs{A \setminus S} + K^{\binom{2\ell - 4}{\ell - 2}} = K^{\binom{2\ell - 4}{\ell - 2}} + 2 \ell - 4.
    \end{equation*}
    Overall, this gives the claimed bound of $\chi(\cH, t, \ell) \leq K^{\binom{2\ell - 4}{\ell - 2}} + 2\ell - 4$.
\end{proof}

We do not know whether the above construction is optimal.
However, we now work toward an upper bound for $\chi(\cH, t + \ell)$ in terms of $\chi(\cH, t, \ell)$ that will show that it is in some sense close to optimal when $t$ is much larger than $\ell$. For this, we need the following immediate corollary of \cref{thm:general}. 

\begin{corollary}\label{cor:excludedsunflower}
    For every pair of integers $p, c\geq 2$, there exists a positive integer $K_{\ref{cor:excludedsunflower}}$ such that every hypergraph $\cH$ either contains a sunflower with $p$ petals and kernel size at most $c - 2$, or $\chi(\cH, c)\leq K_{\ref{cor:excludedsunflower}}$.
\end{corollary}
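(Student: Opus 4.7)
The plan is to obtain this as a direct specialisation of \cref{thm:general}, choosing the parameters so that the two hypotheses of that theorem become, respectively, the stated sunflower-free assumption and a vacuous condition.

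Concretely, I would apply \cref{thm:general} with $t_{\ref{thm:general}} = c - 1$, $\ell_{\ref{thm:general}} = 1$, the given value of $p$, and $\chi_{\ref{thm:general}} = 1$. The first bullet of \cref{thm:general} then reads: $\cH$ contains no sunflower with $p$ petals and kernel of size at most $t_{\ref{thm:general}} - 1 = c - 2$, which is precisely the negation of the first alternative in the corollary's dichotomy. The second bullet asks that $\chi(\cH_S, 1) \leq 1$ for every set $S$ of $c - 1$ vertices; but a $1$-strong colouring merely requires each (nonempty) edge to see at least one colour, so the constant colouring works and $\chi(\cH', 1) = 1$ for every hypergraph $\cH'$. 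Hence the second bullet holds automatically.

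The conclusion of \cref{thm:general} then yields an integer $K = K(c - 1, 1, p, 1)$ such that, whenever $\cH$ avoids a sunflower with $p$ petals and kernel of size at most $c - 2$, we have $\chi(\cH, (c - 1) + 1) = \chi(\cH, c) \leq K$. Setting $K_{\ref{cor:excludedsunflower}} \coloneqq K$ delivers the required bound in the non-sunflower case, which is exactly the stated dichotomy.

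There is no genuine obstacle here; the proof is simply a translation of the parameters. The only thing to double-check is that the degenerate value $\ell = 1$ is permitted in \cref{thm:general}. Inspecting its statement, $\ell$ ranges over non-negative integers and the theorem is proved by induction on $t$ with the value of $\ell$ carried through unchanged, so the choice $\ell = 1$ is legitimate and the argument goes through with no modification.
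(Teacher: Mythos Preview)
Your proposal is correct and matches the paper's own proof exactly: the paper also applies \cref{thm:general} with $t = c - 1$, $\ell = 1$ and $\chi = 1$, relying on the same observation that $\chi(\cH', 1) = 1$ for every hypergraph $\cH'$.
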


\begin{proof}
    Apply \cref{thm:general} with $t = c - 1$, $\ell = 1$ and $\chi = 1$.
\end{proof}

\begin{theorem}\label{thm:t+ell-upper}
    For every pair of positive integers $t$ and $\ell$, there is a natural number $K_{\ref{thm:t+ell-upper}}$ such that for any $t$-intersecting hypergraph $\cH$ we have
    \begin{equation*}
        \chi(\cH, t + \ell)\leq \max\set{K_{\ref{thm:t+ell-upper}},(t + \ell) \cdot \chi(\cH, t, \ell)^{\binom{t + \ell - 2}{\ell - 2}}+t + \ell - 2}.
    \end{equation*}
\end{theorem}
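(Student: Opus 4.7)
The plan is to combine \cref{cor:excludedsunflower} with a single explicit product colouring of $V(\cH)$. Apply \cref{cor:excludedsunflower} with $p = c = t+\ell$: either $\chi(\cH, t+\ell)$ is bounded by the resulting constant (which becomes $K_{\ref{thm:t+ell-upper}}$) and we are done, or $\cH$ contains a sunflower $F$ with petals $P_1, \dotsc, P_{t+\ell}$ and kernel $K_F$ of size at most $t+\ell-2$. Any two sunflower edges meet exactly in $K_F$, so $t$-intersection also gives $|K_F| \geq t$.

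For each $t$-subset $S \subseteq K_F$, fix an $\ell$-strong colouring $c_S$ of $\cH_S$ using at most $\chi(\cH, t, \ell)$ colours. Build a colouring of $V(\cH)$ that rainbow-colours $K_F$ using $|K_F|$ private colours, and for each $v \in V(\cH) \setminus K_F$ assigns the combined colour $(i(v), (c_S(v))_{S \in \binom{K_F}{t}})$ from a disjoint palette, where $i(v) = i$ if $v \in P_i$ and $i(v) = 1$ otherwise. This uses at most
\[
    |K_F| + (t+\ell)\, \chi(\cH, t, \ell)^{\binom{|K_F|}{t}} \leq (t+\ell-2) + (t+\ell)\, \chi(\cH, t, \ell)^{\binom{t+\ell-2}{\ell-2}}
\]
colours, which matches the stated bound.

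It then remains to verify that this colouring is $(t+\ell)$-strong, split into two cases for each edge $e \in \cH$. If $|e \cap K_F| = t + r \geq t$, pick any $T \in \binom{e \cap K_F}{t}$; since $e \setminus T \in E(\cH_T)$ sees at least $\ell$ distinct $c_T$-colours and at most $r$ of those lie on $e \cap K_F \setminus T$, the set $e \setminus K_F$ carries at least $\ell - r$ distinct combined colours, which together with the $t + r$ rainbow colours on $e \cap K_F$ yields $t + \ell$ colours on $e$. If instead $|e \cap K_F| < t$, the $t$-intersection of $e$ with each sunflower edge $K_F \cup P_i$ forces $|e \cap P_i| \geq 1$ for every $i$, so $e \setminus K_F$ attains every region index $1, \dotsc, t+\ell$ and contributes $t + \ell$ distinct combined colours on its own. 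The only delicate choice is to take exactly $p = t+\ell$ petals so that the region-index palette has size matching the coefficient $(t+\ell)$ in the statement; separately bounding the chromatic numbers of the kernel-heavy and kernel-light subhypergraphs and multiplying them would introduce an unwanted extra multiplicative factor.
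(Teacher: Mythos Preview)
Your proof is correct and follows essentially the same approach as the paper: apply \cref{cor:excludedsunflower} with $p=c=t+\ell$ to obtain a sunflower with kernel $\hat S$ of size at most $t+\ell-2$, rainbow-colour $\hat S$, and on $V(\cH)\setminus\hat S$ take the product of the petal-index colouring with the $\ell$-strong link colourings $c_{S'}$ over all $S'\in\binom{\hat S}{t}$. Your verification is in fact more detailed than the paper's (which leaves the $|e\cap\hat S|\ge t$ case implicit), and your observation that $|K_F|\ge t$ is a nice sanity check, though not strictly needed for the bound.
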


\begin{proof}
    Set $K_{\ref{thm:t+ell-upper}}$ to be the constant obtained by taking $c = p = t + \ell$ in \cref{cor:excludedsunflower}.
    By \cref{cor:excludedsunflower}, we may assume that $\cH$ contains a sunflower with $t + \ell$ petals and kernel size at most $t + \ell - 2$. 
    Let $P_1, P_2, \dotsc, P_{t + \ell}$ be the petals of such a sunflower and let $\hat{S}$ be its kernel.
    Consider a $(t + \ell)$-colouring $c_0$ of $\cH$ such that for each petal $P_i$, all vertices in $P_i$ are assigned colour $i$ (the colours of vertices not in any petal can be chosen arbitrarily).
    Note that if a hyperedge intersects every petal, it receives $t + \ell$ colours under this colouring.
    If a hyperedge does not intersect every petal, then, as $\cH$ is $t$-intersecting, it must contain at least $t$ vertices of $\hat{S}$.
    For each $S'\in \binom{\hat{S}}{t}$, let $c_{S'}$ be a $\chi(\cH, t, \ell)$-colouring of $\cH$ whose restriction to $\cH_{S'}$ is $\ell$-strong.
    To obtain a $(t + \ell)$-strong colouring of $\cH$, we colour the vertices not in $\hat{S}$ according to the product colouring of all colourings considered thus far, and then assign $\abs{\hat{S}}$ new colours to the vertices of $\hat{S}$.
    Thus, the total number of colours used is at most $(t + \ell) \cdot \chi(\cH, t, \ell)^{\binom{t + \ell - 2}{t}} + t + \ell - 2$.
\end{proof}

\section{Open problems}

\Cref{thm:mainfiniteness} completes the characterisation of when $\chi(t, c)$ is finite. Nonetheless, the precise value of $\chi(t, c)$ remains open for $t\geq c-1\geq 3$. The best lower bound for $\chi(c - 1, c)$ is $2 c - 1$, which is attained by the complete $(2c - 2)$-uniform hypergraph on $3c - 3$ vertices~\cite{BWY14}. This lower bound is tight for $c \leq 3$, and Blais, Weinstein and Yoshida asked whether it is the correct value for all $c$~\cite[Problem~1.4]{BWY14}. The bound given by our proof is superexponential. It would be interesting to prove a small upper bound.

\begin{problem}\label{prob:UB}
    Improve the upper bound on $\chi(c - 1, c)$. Can it be bounded by an exponential/polynomial/linear function of $c$\textup{?}
\end{problem}

When $t \geq c$, the upper and lower bounds for $\chi(t, c)$ are much closer (since random colourings work) although still not tight. See Blais, Weinstein and Yoshida~\cite{BWY14} and Alon~\cite{Alon2013} for the current state of the art bounds and interesting open problems in this regime. We remark that the current best upper bound for $\chi(c, c)$ is $c^{1/2} e^c$, and so sufficient progress on \cref{prob:UB} would lead to an improvement on this as well.

\Cref{thm:general} gives a qualitative characterisation of when a $t$-intersecting hypergraph has large $(t + \ell)$-strong chromatic number: when there is a set of $t$ vertices whose link has large $\ell$-strong chromatic number. \Cref{thm:c+2upper,thm:t+ell-upper} strengthen this quantitatively by giving bounds on the $(t + \ell)$-strong chromatic number of $\cH$ in terms of the largest $\ell$-strong chromatic number of the link of a set of $t$ vertices. It would be very interesting to refine this bound further.

\begin{problem}
    For fixed positive integers $t$ and $\ell$, what is the smallest integer $x_{t, \ell}$ such that every $t$-intersecting hypergraph $\cH$ satisfies
    \begin{equation*}
        \chi(\cH, t + \ell) = \cO_{t, \ell}\bigl(\chi(\cH, t, \ell)^{x_{t,\ell}}\bigr)?
    \end{equation*}
\end{problem}

\Cref{thm:construction,thm:t+ell-upper} show that $\binom{t + 2\ell - 4}{\ell - 2}/\binom{2\ell - 4}{\ell - 2}\leq x_{t,\ell}\leq \binom{t + \ell - 2}{\ell - 2}$.
Thus, if we fix the value of $\ell$ and allow $t$ to grow we have $x_{t,\ell} = \Theta(t^{\ell - 2})$.
There is a more significant gap if we instead fix the value of $t$, where we see that $x_{t,\ell}$ is $\Omega(1)$ and $\cO(\ell^t)$.

\textbf{Acknowledgements.} This project was initiated at the Graph Theory Workshop held at the Bellairs Research Institute in March 2024. We thank the organisers Sergey Norin, Paul Seymour and David Wood.

{
\fontsize{11pt}{12pt}
\selectfont
	
\hypersetup{linkcolor={red!70!black}}
\setlength{\parskip}{2pt plus 0.3ex minus 0.3ex}

\bibliographystyle{Illingworth.bst}
\bibliography{refs.bib}
}

\end{document}